\documentclass{amsart}

\usepackage{amsmath, amssymb, amsthm}
\usepackage{enumerate}
\usepackage[margin=1in]{geometry}
\usepackage{xcolor}

\title{Energy-minimizing measures supported near fractal $1$-sets}
\author{Rosemarie Bongers}
\address{Department of Applied Mathematics \\ University of California, Merced}
\email{rosemariebongers@ucmerced.edu}
\date{\today}

\newtheorem{theorem}{Theorem}[section]
\newtheorem{defn}[theorem]{Definition}
\newtheorem{lemma}[theorem]{Lemma}
\newtheorem{corollary}[theorem]{Corollary}

\newtheorem{conjecture}[theorem]{Conjecture}

\newtheorem*{theorem*}{Theorem}
\newtheorem*{problem*}{Problem}
\newtheorem*{corollary*}{Corollary}

\newcommand{\dyad}[2]{\mathcal{D}_{#1, #2}}
\newcommand{\dyadsum}[3]{\sum_{#1 \in \dyad{#2}{#3}}}
\newcommand{\dyadsubsum}[4]{\sum_{\substack{#1 \in \dyad{#2}{#3} \\ #1 \subseteq #4}}}

\begin{document}
\maketitle

\begin{abstract}
    Energy techniques can be used to study the structure of fractal sets; the existence of a measure with finite Riesz energy supported on a set gives information about its dimension,  distribution, and density. In this paper, we study energy-minimizing measures supported near fractal $1$-sets. Using physical analogy and a variant of the fast multipole method, we show a strong equidistribution result for these measures. We impose only mild geometric constraints on our sets, assuming only a generational structure of the approximations. This allows us to consider sets which do not exhibit self-similarity or other algebraic constraints. As a corollary, we demonstrate a fundamental limitation in the use of energy techniques for studying Favard length.
\end{abstract}

\tableofcontents

\section{Introduction}
Given a positive Borel measure $\mu$ supported on a subset of $\mathbb{R}^n$ and a parameter $s > 0$, the \emph{Riesz $s$-energy} of $\mu$ is defined by
$$I_s(\mu) = \iint \frac{d\mu(x) \, d\mu(y)}{|x - y|^s}.$$
The finiteness of this quantity gives important information about the geometric distribution of the support of $\mu$; if $\mu$ concentrates on a set of Hausdorff dimension strictly less than $s$, the singularity $|x - y|^{-s}$ is not integrable, and the corresponding energy is infinite. This motivates the definition of the \emph{Riesz $s$-capacity} of a set:
$$\operatorname{Cap}_s(E) = \sup I_s(\mu)^{-1}$$
where the supremum is taken over positive Borel probability measures with support contained in $E$. The Riesz $s$-capacity of a set is positive if and only if there exists a positive measure supported on $E$ with finite $s$-energy; morally, this implies that the set must have dimension at least $s$, since any smaller-dimensional set would require too much concentration of the measure on low-dimensional sets. Frostman's lemma makes this connection precise:

\begin{theorem*}[\protect{\cite[Theorem 8.9]{Mat95}}]
If $A$ is a Borel set in $\mathbb{R}^n$ and $s > 0$, then
\begin{itemize}
    \item If $\mathcal{H}^s(A) < \infty$ then $\operatorname{Cap}_s(A) = 0.$
    \item If $\operatorname{Cap}_s(A) = 0$, then $\mathcal{H}^t(A) = 0$ for all $t > s$.
\end{itemize}
\end{theorem*}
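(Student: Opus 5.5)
For the first assertion I argue by contradiction. Suppose $\mathcal{H}^s(A) < \infty$ but there is a Borel probability measure $\mu$ with $\operatorname{supp}\mu \subseteq A$ and $I_s(\mu) < \infty$. For each $x$ consider the potential $U(x) = \int |x-y|^{-s}\,d\mu(y)$, which is finite for $\mu$-a.e.\ $x$ because $\int U\,d\mu = I_s(\mu)$.

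The key step is to show that the upper $s$-density
$$\Theta^{*s}(\mu,x) = \limsup_{r \to 0} \frac{\mu(B(x,r))}{r^s}$$
vanishes for $\mu$-a.e.\ $x$. Fix $x$ with $U(x) < \infty$. If $\mu(\{x\}) > 0$, then $U(x) = \infty$, so $\mu(\{x\}) = 0$. Hence $\int_{B(x,r)} |x-y|^{-s}\,d\mu(y) \to 0$ as $r \to 0$ by dominated convergence. Since $r^{-s}\mu(B(x,r)) \le \int_{B(x,r)} |x-y|^{-s}\,d\mu(y)$, we get $\Theta^{*s}(\mu,x) = 0$.

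Now invoke the standard density comparison theorem (Mattila, Theorem 6.9): if $\Theta^{*s}(\mu,x) \le \lambda$ for all $x \in B$, then $\mu(B) \le 2^s \lambda\, \mathcal{H}^s(B)$. Let $B = \{x \in A : \Theta^{*s}(\mu,x) = 0\}$, which has full $\mu$-measure. Applying the theorem with every $\lambda > 0$ gives
$$1 = \mu(B) \le 2^s \lambda\, \mathcal{H}^s(A)$$
for all $\lambda > 0$. Since $\mathcal{H}^s(A) < \infty$, letting $\lambda \to 0$ forces $\mu(B) = 0$, a contradiction. Therefore $\operatorname{Cap}_s(A) = 0$.

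For the second assertion I prove the contrapositive. Suppose $\mathcal{H}^t(A) > 0$ for some $t > s$. Since $A$ is Borel, Frostman's lemma supplies a compact $K \subseteq A$ and a nonzero measure $\mu$ on $K$ with $\mu(B(x,r)) \le r^t$ for all $x$ and $r$. Normalize $\mu$ to a probability measure; the growth bound then holds with a constant $C$. For fixed $x$, decompose into dyadic shells $2^{-k-1} < |x-y| \le 2^{-k}$. This gives
$$\int \frac{d\mu(y)}{|x-y|^s} \le \operatorname{diam}(K)^{-s} + \sum_{k \ge k_0} 2^{(k+1)s}\, C\, 2^{-kt} \lesssim 1,$$
and the sum converges because $t > s$. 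The bound is uniform in $x$, so $I_s(\mu) < \infty$ and $\operatorname{Cap}_s(A) > 0$.

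The main obstacle is Frostman's lemma for Borel (rather than compact) sets. For compact sets it follows from a dyadic mass-distribution construction. The extension to Borel sets requires the fact that a Borel set of positive $\mathcal{H}^t$ measure contains a compact subset of positive $\mathcal{H}^t$ measure, which comes from Davies' theorem for analytic sets. I would cite this rather than reprove it.
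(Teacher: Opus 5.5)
Your argument is correct. The paper gives no proof of this statement and simply quotes it from Mattila; your proof is essentially the one given there. For the first part you combine the $\mu$-a.e.\ vanishing of the upper $s$-density (forced by finiteness of the potential) with the density comparison theorem. For the second you use Frostman's lemma for Borel sets plus a uniform bound on the potential via dyadic shells, and you correctly identify the Borel-to-compact reduction as the only nontrivial input.
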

See, e.g. \cite[Chapter 8]{Mat95} for a more complete exposition of the connections between Riesz capacity and Hausdorff measures.

The energy of a measure carries far more information than just the dimension of its support; it also reflects the geometric distribution of the underlying set. Roughly speaking, a highly dispersed a set is (that is, having low $s$-dimensional density) will support measures with lower energies than a more concentrated set. This suggests a connection with more geometrically-motivated objects. To this end, the \emph{Favard length} of a set in the plane is defined by
$$\operatorname{Fav}(E) = \int_{S^{1}} |\pi_{\theta} (E)| \, d\theta,$$
where $| \cdot |$ is the one-dimensional Hausdorff measure (or Lebesgue measure) and $\pi_{\theta}$ denotes orthogonal projection into a line through the origin at angle $\theta$ from the $x$-axis. A foundational result of Mattila \cite[Theorem 3.2]{Mat90} is that the Favard length is connected with $1$-energy:
\begin{theorem*} If $E \subset \mathbb{C}$ is a compact set which supports a positive Borel probability measure $\mu$,
$$\operatorname{Fav}(E) \gtrsim I_1(\mu)^{-1}.$$
\end{theorem*}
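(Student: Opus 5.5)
The plan is to compare the pushforward measures $\mu_\theta = (\pi_\theta)_\#\mu$ on the line with Lebesgue measure on $\pi_\theta(E)$, and then average over directions. Fix $\theta$. The measure $\mu_\theta$ is a probability measure supported on the compact set $\pi_\theta(E)$. If $\mu_\theta$ is absolutely continuous with density $f_\theta \in L^2(\mathbb{R})$, then Cauchy--Schwarz gives
$$1 = \Bigl(\int_{\pi_\theta(E)} f_\theta\Bigr)^2 \le |\pi_\theta(E)| \int f_\theta^2 .$$
Hence $|\pi_\theta(E)| \ge \|f_\theta\|_2^{-2}$. If $\mu_\theta$ is not in $L^2$, the bound $|\pi_\theta(E)|\ge 0 = \|f_\theta\|_2^{-2}$ holds trivially, with the convention $\|f_\theta\|_2=\infty$.

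Next, integrate over $\theta\in S^1$ and apply Jensen's inequality. Since $t\mapsto t^{-1}$ is convex on $(0,\infty]$, and $S^1$ is normalized to a probability space up to a constant,
$$\operatorname{Fav}(E) \ge \int_{S^1} \|\mu_\theta\|_2^{-2}\,d\theta \gtrsim \Bigl(\int_{S^1}\|\mu_\theta\|_2^2\,d\theta\Bigr)^{-1}.$$
It therefore suffices to prove the identity $\int_{S^1}\|\mu_\theta\|_2^2\,d\theta \approx I_1(\mu)$.

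This identity is the key step, and I would prove it with Fourier analysis. First, $\widehat{\mu_\theta}(r) = \widehat{\mu}(re_\theta)$. Plancherel then gives $\|\mu_\theta\|_2^2 = c\int_{\mathbb{R}}|\widehat\mu(re_\theta)|^2\,dr$. Integrating in $\theta$ and passing to polar coordinates in $\mathbb{R}^2$ turns this into
$$\int_{S^1}\|\mu_\theta\|_2^2\,d\theta = 2c\int_{\mathbb{R}^2}|\widehat\mu(\xi)|^2|\xi|^{-1}\,d\xi ,$$
where the factor $2$ accounts for $r$ ranging over both signs. The right-hand side is a constant multiple of $I_1(\mu)$, by the standard Fourier representation $I_s(\mu)=c_{s,2}\int|\widehat\mu(\xi)|^2|\xi|^{s-2}\,d\xi$ for $0<s<2$. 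That representation comes from the fact that the distributional Fourier transform of $|x|^{-s}$ is $c|\xi|^{s-2}$.

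The main obstacle is rigor in this last step: justifying Plancherel when $\mu_\theta$ is merely a measure, and the Fourier formula for the energy of a general measure. I would handle this by mollification. Take $\mu_\varepsilon=\mu*\phi_\varepsilon$ with $\phi_\varepsilon$ a radial approximate identity, so that all the identities hold for $\mu_\varepsilon$. Then pass to the limit using $\widehat{\mu_\varepsilon}=\widehat\mu\,\widehat{\phi_\varepsilon}\to\widehat\mu$ pointwise with $|\widehat{\phi_\varepsilon}|\le 1$, together with monotone or dominated convergence. Lower semicontinuity of $I_1$ under weak convergence handles the energy side.

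If $I_1(\mu)=\infty$ there is nothing to prove. If $I_1(\mu)<\infty$, the Fourier identity shows that $\widehat{\mu}(re_\theta)\in L^2(dr)$ for almost every $\theta$. For those $\theta$, $\mu_\theta$ has an $L^2$ density, so the Cauchy--Schwarz step applies almost everywhere in $\theta$ and the chain of inequalities closes.
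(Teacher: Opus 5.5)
The paper gives no proof of this statement; it is quoted from Mattila \cite{Mat90}. Your argument is the standard proof of it: Cauchy--Schwarz on each projection, Cauchy--Schwarz/Jensen in $\theta$, and $\int_{S^1}\|\mu_\theta\|_2^2\,d\theta \approx I_1(\mu)$ via the Fourier transform. The structure is sound. If you simply invoke the Fourier representation of $I_1$ for arbitrary finite compactly supported measures as a known theorem (see \cite[Chapter 12]{Mat95}), the proof is complete.

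The Plancherel step needs no approximation. A finite measure on $\mathbb{R}$ has an $L^2$ density if and only if its Fourier transform is in $L^2(\mathbb{R})$, and then the norms agree. So $\|\mu_\theta\|_2^2=\int_{\mathbb{R}}|\widehat\mu(re_\theta)|^2\,dr$ in $[0,\infty]$ for every $\theta$. Tonelli and polar coordinates then give $\int_{S^1}\|\mu_\theta\|_2^2\,d\theta = 2c\int_{\mathbb{R}^2}|\widehat\mu(\xi)|^2|\xi|^{-1}\,d\xi$ directly, as an identity in $[0,\infty]$.

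The step that fails as written is your self-contained justification of the energy formula. The direction you need is
\[
\int_{\mathbb{R}^2}|\widehat\mu(\xi)|^2|\xi|^{-1}\,d\xi \lesssim I_1(\mu).
\]
Mollification plus lower semicontinuity proves the opposite inequality. Convergence on the Fourier side gives $I_1(\mu_\varepsilon)\to c\int|\widehat\mu|^2|\xi|^{-1}$. Then $I_1(\mu)\le\liminf_{\varepsilon\to 0} I_1(\mu_\varepsilon)$ only yields $I_1(\mu)\lesssim\int|\widehat\mu|^2|\xi|^{-1}$, which is useless for bounding $\operatorname{Fav}(E)$ from below.

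What is required is the upper bound $\limsup_{\varepsilon\to0}I_1(\mu_\varepsilon)\lesssim I_1(\mu)$; after that, Fatou's lemma on the Fourier side finishes. This upper bound follows from two facts:
\[
I_1(\mu_\varepsilon)=\iint (k*\phi_\varepsilon*\tilde\phi_\varepsilon)(x-y)\,d\mu(x)\,d\mu(y), \qquad k(z)=|z|^{-1},\ \tilde\phi(z)=\phi(-z),
\]
and the pointwise bound $k*\phi_\varepsilon*\tilde\phi_\varepsilon\le C_\phi\, k$ for $\phi\ge 0$ bounded and supported in the unit disc. To prove the pointwise bound, treat $|z|\ge 4\varepsilon$ and $|z|<4\varepsilon$ separately.

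Alternatively, a Fourier-free route gives exactly the needed direction and nothing more; this is the argument in \cite[Chapter 9]{Mat95}. Using the lower derivative of $\mu_\theta$ and Fatou, $\|\mu_\theta\|_2^2\le\liminf_{\delta\to0}(2\delta)^{-1}(\mu\times\mu)\{(x,y):|\pi_\theta(x-y)|\le\delta\}$, with $\|\mu_\theta\|_2=\infty$ if $\mu_\theta$ has no $L^2$ density. Integrate in $\theta$ using $|\{\theta:|\pi_\theta(z)|\le\delta\}|\lesssim\delta/|z|$, and apply Fatou again. This gives $\int_{S^1}\|\mu_\theta\|_2^2\,d\theta\lesssim I_1(\mu)$.
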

Thus if one is searching for lower bounds on the Favard length of a set, then one may ask an alternative question: find the measure supported on the set which extremizes the $1$-energy, and calculate its energy. This leads us to the following:

\begin{problem*} If $E \subseteq \mathbb{C}$ is a compact set, find the Borel probability measure $\mu$ supported on $E$ with minimal $1$-energy.
\end{problem*}

This problem is very physically motivated, and such a measure is referred to as the equilibrium measure. One can imagine placing a distribution of point charges within $E$, in which case the Riesz energy is a measurement of the total repulsion of the configuration. It is natural to expect that such charges will distribute in a fairly uniform manner, avoiding concentration as much as possible. The \emph{poppy seed bagel} theorem of Hardin and Saff \cite{HarSaf05} proves this result for rectifiable manifolds in $\mathbb{R}^d$; see also their expository article \cite{HarSaf04}. A more recent development of Hardin, Reznikov, Saff, and Volberg \cite{HarRezSafVol19} studied the local properties and regularity of energy-minimizing measures and proved a separation condition for optimal point configurations given a certain degree of regularity of the underlying set. Related work of Anderson, Reznikov, Vlasiuk, and White \cite{AndRezVlaWhi22} considers applications of potential theory to polarization and covering problems.

The fractal case is frequently more challenging, since their intricate geometric structure can preclude the kind of even distribution that is observed for rectifiable manifolds. Results in this direction frequently consider the optimal packing of point configurations using tools from potential theory; see, e.g. the work of Anderson and Reznikov \cite{AndRez24} for the asymptotics for minimal $s$-energy of $N$-point configurations contained within a class of self-similar sets of non-integer dimension. For a computational perspective that also applies to fractals, Rajon, Ransford, and Rostand \cite{RajRanRos10} phrased the energy-minimization problem in terms of a quadratic form and developed an algorithm which can be applied to a large class of sets; their results yield numerical estimates of capacities (including logarithmic, Riesz, and hyperbolic) for various specific sets in $\mathbb{R}^2$ and $\mathbb{R}^3$. 

In this paper, the sets we will consider are those which approximate $1$-dimensional fractals. The prototypical example we will have in mind is the $n$-th generation $\mathcal{K}_n$ of the four-corner (or Garnett) Cantor set, as it is a typical testbed for conjectures in geometric measure theory. The sets are defined recursively; $\mathcal{K}_0$ is the unit square $[0, 1]^2$. The next generation is $\mathcal{K}_1$, which is defined via slicing $\mathcal{K}_0$ into sixteen squares (four equal cuts in the vertical and horizontal directions) and taking the four squares at the corners.
Subsequent sets are defined by taking each constituent square in $\mathcal{K}_n$ and repeating the four-corner construction. This yields a sequence of sets $\mathcal{K}_n$ each consisting of $4^n$ squares with sidelength $4^{-n}$. The limit of this process is $\mathcal{K} = \bigcap_{n = 0}^{\infty} \mathcal{K}_n$, which is a purely unrectifiable set with positive and finite 1-Hausdorff measure. Since this is a purely unrectifiable 1-dimensional set, the Besicovitch projection theorem implies that
$$0 = \operatorname{Fav}(\mathcal{K}) = \lim_{n \to \infty} \operatorname{Fav}(\mathcal{K}_n),$$
so one may study the asymptotic rate of decay of the Favard length of the generations. 

A challenging open question in geometric measure theory is to understand the sharp asymptotics of $\operatorname{Fav}(\mathcal{K}_n)$. Energy techniques can prove that
$$\operatorname{Fav}(\mathcal{K}_n) \gtrsim \frac 1 n.$$
This follows from the aforementioned result of Mattila; one may directly construct a measure supported on $\mathcal{K}_n$ which has energy comparable to $n$. To wit, the equidistributed measure on $\mathcal{K}_n$ achieves this -- that is, the measure which assigns mass $4^{-n}$ uniformly to each of the $4^n$ constituent squares. The current best result is due to Bateman and Volberg \cite{BatVol10} and improves this by a logarithmic factor: 
$$\operatorname{Fav}(\mathcal{K}_n) \gtrsim \frac{\log n}{n}.$$
This result does not use energy techniques, but rather relies on a delicate square-counting argument that varies based on the projection angle. One may ask whether energy techniques could be used to prove a better result than $\operatorname{Fav}(\mathcal{K}_n) \gtrsim 1/n$; this would be equivalent to constructing a measure supported on $\mathcal{K}_n$ with energy substantially lower than $n$. The goal of this paper is to prove that this is not possible -- any probability measure supported on $\mathcal{K}_n$ will have energy $\gtrsim n$.

The main result of this paper is to show an equidistribution result for energy minimizers on a broad class of sets in the plane -- roughly speaking, a fractal poppy seed bagel theorem. Our approach is inspired by the fast multipole method, which is used to simplify computational algorithms for the dynamics of $N$-body gravitational systems. This algorithm groups approximates complex systems of close-together masses as single point masses in order to reduce the computational expense of modeling a large number of pairwise interactions. Our approach here is to replace the Riesz potential with a notion of discretized \emph{repulsion}, which approximates away the fine structure of a fractal set. Under a mild geometric constraint (which is closely related to being an approximant of a fractal $1$-set), the repulsion of a measure controls the Riesz energy from below. We then show that repulsion-minimizing measures are equidistributed in a precise sense using a \emph{mass-exchange} principle: if a measure concentrates its mass at any location, we can exchange its mass to a less-concentrated region and decrease the overall repulsion. 

The goal of this paper is two-fold: we wish to give an ``elementary'' characterization of energy-minimizers without using many tools of potential theory, and we wish to give as general a geometric result as possible. Our results will apply to any set which exhibits a \emph{generational} structure, where we can find finer and finer approximations to the set by taking a sequence of cubes, each having a unique ancestor and finitely many children. We now begin to make these notions precise. 

\subsection{Notation, definitions, and preliminaries}
We begin with our key geometric assumption: that the set under consideration has a generational structure. 

\begin{defn} Given a natural number $n$,  we say that a set $E$ has the \emph{filtration property through generation $n$} if for each integer $0 \le \ell \le n$, there is a pairwise disjoint family of sets $\dyad E \ell$ for which 
\begin{itemize}
\item If $0 \le k \le \ell$ is an integer and $R \in \dyad E \ell$, there is a unique $P \in \dyad E k$ with $R \subseteq P.$ In this case we call $P$ the \emph{generation $k$ ancestor} of $R$ and write $\widetilde{R}^{(k)}$ for $P$.
\item $\displaystyle E = \bigcup_{R \in \dyad E n} R.$
\end{itemize}
\end{defn}

In the case that $k = \ell - 1$, we simply write $\widetilde{Q}$ for its (immediate) ancestor. The notation is meant to evoke the idea of a dyadic grid at scale $2^{-\ell}$, where each generation $\ell$ cube has children at subsequent generations and a unique ancestor at each preceding generation.  The prototypical example for a set with the filtration property (through any generation) is the $n$-th generation $\mathcal{K}_n$ of the four-corner Cantor set.

\begin{defn} Given a natural number $n$, a set $E$ with the filtration property through generation $n$, and an increasing sequence $\{r_{\ell} : 0 \le \ell \le n\}$, the \emph{repulsion} between two sets $Q, R \in \dyad E n$ is
$$r(Q, R) = r_{\ell},$$
where $\ell$ is the largest integer for which $\widetilde{Q}^{\ell} = \widetilde{R}^{\ell}.$ If $\mu$ is a measure supported on $E$, its \emph{repulsion at generation $n$} is defined as
$$\mathcal{Q}(\mu) = \dyadsum Q E n \dyadsum R E n \mu(Q) \mu(R)r(Q,R) .$$
In a case where it is important to highlight the sequence $\vec r = \{r_{\ell} : 0 \le \ell \le n\}$, we explicitly notate the corresponding repulsion as $\mathcal{Q}_{\vec r}(\mu).$
\end{defn}

We will be able to characterize repulsion-minimizing measures using only the filtration property. In order to translate this to \emph{energy}-minimizers, we will use one additional property; qualitatively speaking, the set should exhibit the same degree of concentration (i.e. number of children per ancestor) at all locations.

\begin{defn}
Suppose that $n$ is a natural number and $E$ is a set with the filtration property through generation $n$. We say that $E$ has the \emph{socialist filtration property} if there is a sequence of nonnegative integers $N_{\ell}$ such that for every $P \in \dyad E \ell$,
$$\left|\left\{Q \in \dyad E {\ell + 1} : Q \subseteq P\right\}\right| = N_{\ell}.$$
\end{defn}

\subsection{Main results and outline}
The main result of this paper is the following.
\begin{theorem*}[\ref{thm:socialist}]
Fix a set $E$ and a natural number $n$ for which $E$ has the socialist filtration property through generation $n$, as well as a sequence $\{r_{\ell} : 0 \le \ell \le n\}$. Let $\mu$ be a positive Borel probability measure supported on $E$ which minimizes the repulsion induced by $\mathcal{Q}$. If $A, B \in \dyad E n$, we have $\mu(A) = \mu(B).$
\end{theorem*}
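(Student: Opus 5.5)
The plan is to diagonalize the repulsion by rewriting it as a weighted sum of squared masses over the generations, and then to minimize each term separately. A local mass-exchange argument should not be needed. For $Q,R\in\dyad E n$, write $L(Q,R)$ for the largest $\ell$ with $\widetilde{Q}^{(\ell)}=\widetilde{R}^{(\ell)}$. By the uniqueness of ancestors in the filtration property, $Q$ and $R$ share a generation-$k$ ancestor exactly when $k\le L(Q,R)$. Setting $r_{-1}=0$ and telescoping gives
$$r(Q,R)=r_{L(Q,R)}=\sum_{k=0}^{L(Q,R)}(r_k-r_{k-1})=\sum_{k=0}^{n}(r_k-r_{k-1})\,\mathbf{1}\bigl[\widetilde{Q}^{(k)}=\widetilde{R}^{(k)}\bigr].$$
Substituting this into $\mathcal{Q}(\mu)$ and interchanging sums, I will group pairs $(Q,R)$ according to their common generation-$k$ ancestor $P$. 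Since $\mu(P)=\sum_{Q\subseteq P,\,Q\in\dyad E n}\mu(Q)$ (the generation-$n$ cells are disjoint and cover $E\supseteq\operatorname{supp}\mu$), this yields
$$\mathcal{Q}(\mu)=r_0+\sum_{k=1}^{n}(r_k-r_{k-1})\sum_{P\in\dyad E k}\mu(P)^2,$$
where the $k=0$ term has collapsed to $r_0\,\mu(E)^2=r_0$.

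Next I minimize each inner sum independently. Every coefficient $r_k-r_{k-1}$ is nonnegative because the sequence is increasing. For each $k$, the masses $\{\mu(P)\}_{P\in\dyad E k}$ sum to $1$, so Cauchy--Schwarz gives $\sum_P\mu(P)^2\ge 1/|\dyad E k|$, with equality iff $\mu(P)=1/|\dyad E k|$ for all $P$. The socialist filtration property enters at this point. Each generation-$k$ cell contains exactly $N_k N_{k+1}\cdots N_{n-1}$ generation-$n$ cells, the same number for every $P$. Hence the measure $\nu$ assigning mass $1/|\dyad E n|$ to each generation-$n$ cell (for example, normalized area on each cell) gives every $P\in\dyad E k$ mass $1/|\dyad E k|$. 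So $\nu$ attains every lower bound simultaneously, and therefore $\mathcal{Q}(\nu)$ is the global minimum.

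Finally, let $\mu$ be any minimizer. Then $\mathcal{Q}(\mu)=\mathcal{Q}(\nu)$, and since each term is at least its own lower bound, every term with a positive coefficient must be an equality. In particular, the $k=n$ term has coefficient $r_n-r_{n-1}>0$ when the sequence is strictly increasing, and equality in Cauchy--Schwarz then forces $\mu(A)=\mu(B)$ for all $A,B\in\dyad E n$.

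The main obstacle is the interpretation of ``increasing'' and the bookkeeping in the regrouping step. If only $r_n\ge r_{n-1}$ is assumed, uniqueness fails: with $r_n=r_{n-1}$, mass can be shuffled among siblings at no cost. So I will read the hypothesis as strict monotonicity, at least $r_n>r_{n-1}$, and record that equidistribution at generation $k$ holds whenever $r_k>r_{k-1}$. The identity $\sum_{Q,R\subseteq P}\mu(Q)\mu(R)=\mu(P)^2$ needs care: it uses pairwise disjointness of $\dyad E n$ and the fact that each generation-$n$ cell lies in exactly one generation-$k$ cell. I will check this explicitly.
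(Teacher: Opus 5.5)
Your argument is correct, and it takes a genuinely different route from the paper.

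The paper argues locally and variationally:
\begin{itemize}
\item It first proves a non-degeneracy lemma: a minimizer charges every generation-$n$ cell. This is needed only so that the rescaling factors $\frac{\mu(A)+\mu(B)}{2\mu(A)}$ defining the mass-exchanged measure $\tilde\mu$ make sense.
\item It then computes $\mathcal{Q}(\mu)-\mathcal{Q}(\tilde\mu)$ for siblings $A,B$ by a six-case analysis with summation by parts, and deduces sibling equidistribution at the finest generation.
\item It bootstraps toward the root one generation at a time, using the socialist property to evaluate $\sum_{P\subseteq A}\mu(P)^2=\mu(A)^2/M_j$.
\item Finally, it multiplies sibling ratios down the tree to compare arbitrary cells.
\end{itemize}

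Your telescoped identity $\mathcal{Q}(\mu)=r_0+\sum_{k=1}^{n}(r_k-r_{k-1})\sum_{P\in\mathcal{D}_{E,k}}\mu(P)^2$ diagonalizes the form in one step. In fact the paper's mass-exchange lemma is just this identity evaluated at $\mu$ and at $\tilde\mu$ and subtracted, since only cells of generation at least $k+1$ lying inside $A$ or $B$ change mass.

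Your route buys several things:
\begin{itemize}
\item It needs no non-degeneracy lemma and no induction, and only the hypothesis $r_n>r_{n-1}$.
\item It gives an explicit minimizer and minimal value $r_0+\sum_{k\ge1}(r_k-r_{k-1})/|\mathcal{D}_{E,k}|$.
\item It gives $\mathcal{Q}(\mu)\ge\mathcal{Q}(\nu)$ for every $\mu$ directly. That is exactly what the paper's application to $\mathcal{K}_n$ uses, whereas the paper gets there via the theorem plus the unstated (though easy) existence of a minimizer.
\item It makes the role of the socialist hypothesis transparent: it is precisely what makes the per-generation Cauchy--Schwarz bounds simultaneously attainable.
\end{itemize}
The paper's exchange argument is closer to the physical intuition and produces sibling-level equalities at each generation as intermediate statements, but your identity recovers those as well.

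One bookkeeping remark: collapsing the $k=0$ term to the constant $r_0$ uses that all generation-$n$ cells lie in a single generation-$0$ cell. This is implicit in the paper's definition of $r(Q,R)$.
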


Because the generations $\mathcal{K}_n$ satisfy the socialist filtration property, we also have the following corollary:
\begin{corollary*}[3.4]
If $\mu$ is a positive Borel probability measure supported on $\mathcal{K}_n$, then
$$I_1(\mu) \gtrsim n.$$
\end{corollary*}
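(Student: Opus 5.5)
The plan is to compare the Riesz $1$-energy with a repulsion $\mathcal{Q}_{\vec r}$ for a suitable choice of $\vec r$. Then Theorem \ref{thm:socialist} identifies the repulsion minimizer as the equidistributed measure, and its repulsion can be computed explicitly. For $\mathcal{K}_n$ take $\dyad{\mathcal{K}_n}{\ell}$ to be the $4^\ell$ generation-$\ell$ squares of sidelength $4^{-\ell}$. These are pairwise disjoint closed squares (the corner squares are separated by gaps), each generation-$\ell$ square contains exactly $4$ generation-$(\ell+1)$ squares, and their union at level $n$ is $\mathcal{K}_n$. So $\mathcal{K}_n$ has the socialist filtration property through generation $n$ with $N_\ell = 4$. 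Choose the increasing sequence $r_\ell = 4^{\ell}$.

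\emph{Step 1: energy dominates repulsion.} Let $Q, R \in \dyad{\mathcal{K}_n}{n}$ and let $\ell$ be the largest index with $\widetilde{Q}^{(\ell)} = \widetilde{R}^{(\ell)}$; when $Q = R$ this gives $\ell = n$. Then every $x \in Q$ and $y \in R$ lie in a common square of side $4^{-\ell}$, so $|x-y| \le \sqrt{2}\,4^{-\ell}$, and hence $|x-y|^{-1} \ge 4^{\ell}/\sqrt 2 = r(Q,R)/\sqrt2$. Splitting the double integral over the disjoint pieces $Q \times R$ gives
$$I_1(\mu) = \sum_{Q}\sum_{R} \iint_{Q\times R} \frac{d\mu(x)\,d\mu(y)}{|x-y|} \ge \frac{1}{\sqrt 2}\sum_Q\sum_R \mu(Q)\mu(R) r(Q,R) = \frac{1}{\sqrt2}\mathcal{Q}_{\vec r}(\mu).$$

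\emph{Step 2: minimize the repulsion.} The quantity $\mathcal{Q}_{\vec r}(\mu)$ depends only on the vector of masses $(\mu(Q))_{Q \in \dyad{\mathcal{K}_n}{n}}$, which lies in the probability simplex in $\mathbb{R}^{4^n}$. It is a continuous (quadratic) function there, so a minimizer exists. Every mass vector is realized by a measure on $\mathcal{K}_n$, for instance by point masses placed in the squares. By Theorem \ref{thm:socialist}, any minimizer $\nu$ satisfies $\nu(Q) = 4^{-n}$ for every $Q$. Therefore $\mathcal{Q}_{\vec r}(\mu) \ge \mathcal{Q}_{\vec r}(\nu)$ for every probability measure $\mu$ on $\mathcal{K}_n$.

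\emph{Step 3: compute $\mathcal{Q}_{\vec r}(\nu)$.} Fix $Q$. For $\ell < n$, the number of $R$ whose deepest common ancestor with $Q$ is at generation exactly $\ell$ equals $4^{n-\ell} - 4^{n-\ell-1} = \tfrac34 4^{n-\ell}$; for $\ell = n$ there is only $R = Q$. Hence
$$\mathcal{Q}_{\vec r}(\nu) = 4^n\cdot 4^{-2n}\Big(\sum_{\ell=0}^{n-1} \tfrac34\, 4^{n-\ell}4^{\ell} + 4^{n}\Big) = \tfrac34 n + 1.$$
Combining the three steps gives $I_1(\mu) \ge \frac{1}{\sqrt2}\left(\tfrac34 n+1\right) \gtrsim n$.

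The only genuinely delicate ingredient is Theorem \ref{thm:socialist} itself, which we are taking as given. Among the remaining steps, the one needing care is Step 1. There we must use the distance bound at the level of the deepest common ancestor, so that the inequality goes in the right direction, and we must note that the level-$n$ squares are genuinely disjoint, so that splitting the integral over $Q \times R$ loses nothing.
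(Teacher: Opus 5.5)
Your proposal is correct and follows essentially the same route as the paper. You bound $I_1(\mu)$ below by $\mathcal{Q}_{\vec r}(\mu)$ with $r_\ell = 4^\ell$, invoke Theorem \ref{thm:socialist} to identify the repulsion minimizer as the equidistributed measure, and count the $3\cdot 4^{n-\ell-1}$ cubes at each level to get repulsion of order $n$. Your version is slightly more careful than the paper's: you prove the energy--repulsion comparison directly from the diameter bound (no separation needed), and you make explicit the existence of a minimizer and the step $\mathcal{Q}_{\vec r}(\mu) \ge \mathcal{Q}_{\vec r}(\nu)$, which the paper leaves implicit.
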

This demonstrates a fundamental limitation in the application of energies to Favard length: they cannot be used to show any result sharper than $\operatorname{Fav}(\mathcal{K}_n) \gtrsim \frac 1n.$

The remainder of the paper is organized as follows:
\begin{itemize}
    \item In Section 2, we show that repulsion-minimizing measures on sets with a filtration property are equidistributed. This proceeds in a few steps:
    \begin{itemize}
        \item We show that repulsion-minimizing measures are non-degenerate, in that they cannot assign zero mass to any constituent of the set.
        \item Mass-exchange then allows us to prove a local equidistribution result at the finest scale of the filtration.
        \item Under the socialist filtration assumption, the local equidistribution result can be boostrapped to a global one.
    \end{itemize}
    \item In Section 3, we characterize energy-minimizing measures supported on $\mathcal{K}_n$ and discuss
    sharpness.
    \item In Section 4, we discuss a general computational approach following the lines of \cite{RajRanRos10}, and phrase a linear algebraic conjecture which would lead to 
    sharp asymptotics for a broad class of fractal sets.
\end{itemize}

\section{Repulsion-minimizing measures on sets with a filtration}

\subsection{Non-degeneracy of repulsion minimizers} \label{RepulsionStep1}
\begin{lemma}
Suppose that $\mu$ is a positive Borel probability measure supported on a set $E$ which satisfies the filtration property through generation $n$. If $\mathcal{Q}(\mu)$ is minimal over all such measures, then $$\mu(A) > 0 \quad\quad \text{for all $A \in \dyad E n$.}$$ 
\end{lemma}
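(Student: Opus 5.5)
The plan is a first-order \emph{mass-exchange} argument. Suppose, for contradiction, that $\mu$ minimizes $\mathcal{Q}$ but $\mu(A)=0$ for some $A\in\dyad E n$. I will move a small amount of mass $\varepsilon$ into $A$ from a carefully chosen $B\in\dyad E n$ with $\mu(B)>0$, and show that the repulsion strictly decreases.

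\textbf{Conventions.} I assume the sequence $r_\ell$ is strictly increasing, so that $r(Q,Q)=r_n>r_\ell$ for every $\ell<n$. I also assume $\dyad E 0$ consists of a single set containing all of $E$, so that any two cubes have a common ancestor and $r(Q,R)$ is defined. Since $\mathcal{Q}(\mu)$ depends only on the vector $m_Q=\mu(Q)$, I can realize the perturbation at the level of measures directly. Let $\mu'=\mu+\varepsilon\,\nu_A-\tfrac{\varepsilon}{\mu(B)}\,\mu|_B$, where $\nu_A$ is any probability measure on the nonempty set $A$. This $\mu'$ is a positive probability measure supported on $E$, and its mass vector is $m+\varepsilon(e_A-e_B)$.

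\textbf{Expanding the quadratic form.} Write $\mathcal{Q}(\mu)=\langle Km,m\rangle$ with symmetric kernel $K_{QR}=r(Q,R)$. Then
\[
\mathcal{Q}(\mu')-\mathcal{Q}(\mu)=2\varepsilon\big[(Km)_A-(Km)_B\big]+\varepsilon^2\big(2r_n-2r(A,B)\big).
\]
The $\varepsilon^2$ term is bounded. So it suffices to choose $B$ with $\mu(B)>0$ and $(Km)_B>(Km)_A$; then for small $\varepsilon$ the difference is negative, contradicting minimality.

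\textbf{Choosing $B$.} Let $\ell$ be the largest integer such that the ancestor $P=\widetilde A^{(\ell)}$ has $\mu(P)>0$. Such an $\ell$ exists because the generation-$0$ ancestor has mass $1$, and $\ell<n$ because $\mu(A)=0$. Pick $B\in\dyad E n$ with $B\subseteq P$ and $\mu(B)>0$. I then compare $(Km)_A=\sum_R m_R\,r(A,R)$ with $(Km)_B$ term by term over those $R$ with $m_R>0$.
\begin{itemize}
\item If $R\not\subseteq P$, then $R$ separates from both $A$ and $B$ at the same generation, which is below $\ell$. Hence $r(A,R)=r(B,R)$.
\item If $R\subseteq P$, then $R\not\subseteq\widetilde A^{(\ell+1)}$, since that ancestor has zero mass by maximality of $\ell$. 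Hence $r(A,R)=r_\ell\le r(B,R)$.
\item For the particular term $R=B$, the inequality is strict: $r(B,B)=r_n>r_\ell$.
\end{itemize}
Summing gives $(Km)_B-(Km)_A\ge \mu(B)(r_n-r_\ell)>0$, which yields the contradiction.

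\textbf{Main obstacle.} The delicate step is the choice of $B$. A careless choice, such as a $B$ far from $A$, could leave $(Km)_B<(Km)_A$. Choosing $B$ inside the deepest massive ancestor of $A$ is what makes every comparison go in the same direction. The remaining points are minor: checking that the perturbed object is a genuine probability measure (handled above), and noting that strict monotonicity of $\vec r$ is needed for the strict inequality at the diagonal term.
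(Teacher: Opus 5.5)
Your proof is correct. It uses the same choice of $B$ as the paper, but it reaches the contradiction by a different and shorter computation.

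You pick $B$ exactly as the paper does: a cube of positive mass inside the deepest ancestor $\widetilde{A}^{(\ell)}$ of $A$ that carries positive mass.

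The paper then makes a finite transfer, $\nu(A)=\nu(B)=\tfrac12\mu(B)$. It evaluates $\mathcal{Q}(\mu)-\mathcal{Q}(\nu)$ exactly through a six-case split according to whether $Q$ and $R$ lie in $\widetilde{A}^{(k+1)}$, $\widetilde{B}^{(k+1)}$, or the remainder $N$. Only afterwards does it bound the result below by $\tfrac12(r_n-r_k)\mu(B)^2+(r_{k+1}-r_k)\mu(B)\mu(\widetilde{B}^{(k+1)}\setminus B)$.

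You instead expand the quadratic form along $e_A-e_B$. This isolates the first-order quantity $(Km)_B-(Km)_A$, which is precisely the ``potential'' comparison the paper describes in words before its computation. That comparison then becomes a three-line term-by-term check.

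What your route buys is brevity and transparency. The case bookkeeping disappears, and the argument is visibly a first-order optimality condition on the probability simplex. It would therefore adapt to any symmetric kernel for which the same potential comparison holds.

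Nothing quantitative is lost. Because $r(A,B)=r_\ell$, your expansion is exact with quadratic coefficient $2(r_n-r_\ell)$. Hence every $\varepsilon\in(0,\mu(B))$ gives a strict decrease. Taking $\varepsilon=\tfrac12\mu(B)$ reproduces the paper's exact identity
$\mathcal{Q}(\mu)-\mathcal{Q}(\nu)=\tfrac12(r_n-r_\ell)\mu(B)^2+\mu(B)\sum_{R\subseteq P,\,R\neq B}(r(B,R)-r_\ell)\mu(R)$,
and with it the paper's lower bound.

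Your two stated conventions are the same tacit assumptions the paper's proof relies on: strict monotonicity of $\vec r$, and a single generation-$0$ cell containing all of $E$, which makes $r(Q,R)$ defined and gives the root mass $1$.
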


\begin{proof}
As a warmup, suppose that there are siblings $A, B \in \dyad E n$ (that is, $\tilde{A} = \tilde{B}$) but $\mu(A) = 0$ and $ \mu(B) > 0$. Define a new measure $\nu$ such that $\nu(A) = \nu(B) = \frac 1 2 \mu(B)$ with $\left.\nu\right|_{(A \cup B)^c} = \left.\mu\right|_{(A \cup B)^c}.$ This is still a Borel probability measure, and we can calculate the difference in repulsions as follows:
\begin{align*}
\Delta \mathcal{Q} := \mathcal{Q}(\mu) - \mathcal{Q}(\nu) = \dyadsum Q E n \dyadsum R E n r(Q, R) \left[\mu(Q) \mu(R) - \nu(Q) \nu(R)\right].
\end{align*}
Our goal is to show that this overall sum is positive; this will show that $\mu$ is non-optimal. We now handle three cases based on $Q$:

\begin{itemize}
\item If $Q = A$, the relevant part of the sum is 
\begin{align*}
\dyadsum R E n &r(A, R) \left[\mu(Q) \mu(R) - \nu(Q) \nu(R)\right] \\
&= r(A, A) \left[\mu(A)^2 - \nu(A)^2\right] + r(A, B) \left[\mu(A) \mu(B) - \nu(A) \nu(B)\right] \\
&\quad + \sum_{R \ne A, B} r(A, R) \left[\mu(A) \mu(R) - \nu(A) \nu(R)\right] \\
&= - \frac 1 4 r_n \mu(B)^2 - \frac 1 4 r_{n - 1} \mu(B)^2 - \frac 1 2 \mu(B) \sum_{R \ne A, B} r(A, R) \mu(R).
\end{align*}
The last step follows as $A$ and $B$ are siblings.
\item If $Q = B$, the relevant part of the sum is
\begin{align*}
\dyadsum R E n &r(B, R) \left[\mu(B) \mu(R) - \nu(B) \nu(R)\right] \\
&= r(B, A) \left[\mu(B) \mu(A) - \nu(B) \nu(A) \right] + r(B, B) \left[\mu(B)^2 - \nu(B)^2\right] \\
&\quad + \sum_{R \ne A, B} r(B, R) \left[\mu(B) \mu(R) - \nu(B) \nu(R)\right] \\
&= -\frac 1 4 r_{n - 1} \mu(B)^2 + \frac 3 4 r_n \mu(B)^2 + \frac 1 2 \mu(B) \sum_{R \ne A, B} r(B, R) \mu(R).
\end{align*}
\item If $Q \ne A$ or $B$, the relevant part of the sum is
\begin{align*}
\dyadsum R E n r(Q, R) \left[\mu(Q) \mu(R) - \nu(Q) \nu(R)\right] &= \mu(Q) \dyadsum R E n r(Q, R) \left[\mu(R) - \nu(R)\right] \\
&= \mu(Q) \big(r(Q, A) \left[\mu(A) - \nu(A)\right] + r(Q, B) \left[\mu(B) - \nu(B)\right]\big) \\
&= \mu(Q) \left(-\frac 1 2 r(Q, A) \mu(B) + \frac 1 2 r(Q, B) \mu(B)\right) \\
\end{align*}
after noting that $\mu(R) = \nu(R)$ unless $R = A$ or $R = B$. Furthermore, because $A$ and $B$ share a parent and $Q$ is not equal to either $A$ or $B$, we have $r(Q, A) = r(Q, B)$. Therefore, this term is zero for all such $Q$. 
\end{itemize}
Combining the three cases, we have
\begin{align*}
\Delta \mathcal{Q} &= \frac 1 2 r_n \mu(B)^2 - \frac 1 2 r_{n - 1} \mu(B)^2 + \frac 1 2 \mu(B) \sum_{R \ne A, B} r(B, R) \mu(R) - r(A, R) \mu(R).
\end{align*}
Again using the fact that $r(B, R) = r(A, R)$ for all $R \ne A, B$, we are left with
$$\Delta \mathcal{Q} = \frac{1}{2} \left(r_n - r_{n - 1}\right) \mu(B)^2  > 0$$
as desired.

In the general case, suppose that there is an $A \in \dyad E n$ with $\mu(A) = 0$. The idea of the proof will again be to transfer mass from a region with positive mass; however, there is not a natural way to do this in general -- especially for a set which is not self-similar. The key idea will be that we can regard the overall repulsion as the integral of a \emph{potential} determined by other constituents:
$$\mathcal{Q}(\mu) = \dyadsum Q E n \mu(Q) \underbrace{\dyadsum R E n r(Q, R) \mu(R)}_{\text{potential}}.$$
A region of $E$ which has zero mass will have a strictly smaller potential than any nearby region of $E$ with positive mass. A result of this is that transferring mass from a nearby constituent will decrease the repulsion of the measure, again showing that $\mu$ is not optimal. We now make this precise.

Fix a constituent $A \in \dyad E n$ with $\mu(A) = 0$. Since $\mu$ is a non-zero measure, there is a generation $k$ such that the ancestor $\widetilde{A}^{(k)}$ has non-zero mass but $\mu(\widetilde{A}^{(k + 1)}) = 0.$ Choose any constituent $B \in \dyad E n$ with $B \subseteq \widetilde{A}^{(k)}$ and $\mu(B) > 0$. Define $\nu$ as in the previous case:
$$\nu(A) = \nu(B) = \frac 1 2 \mu(B), \quad \left.\nu\right|_{(A \cup B)^c} = \left.\mu\right|_{(A \cup B)^c}.$$
We then have
$$\Delta \mathcal Q := \dyadsum Q E n \dyadsum R E n r(Q, R) \left[\mu(Q) \mu(R) - \nu(Q) \nu(R)\right]$$
and proceed in cases based on the geometry of $Q$ and $R$. For notation, let $N = E \setminus (\widetilde{A}^{(k + 1)} \cup \widetilde{B}^{(k + 1)})$; this corresponds to the part of $E$ which is neither close to $A$ nor $B$. By an abuse of notation,
$$\Delta \mathcal Q = \left(\dyadsubsum Q E n {\widetilde{A}^{(k + 1)}} + \dyadsubsum Q E n {\widetilde{B}^{(k + 1)}} + \dyadsubsum Q E n N\right)\left(\dyadsubsum R E n {\widetilde{A}^{(k + 1)}} + \dyadsubsum R E n {\widetilde{B}^{(k + 1)}} + \dyadsubsum R E n N\right) r(Q, R) \left[\mu(Q) \mu(R) - \nu(Q) \nu(R)\right].$$
This suggests nine cases based on the relative geometry; this is immediately reduced to six cases by observing the relevant symmetries.

\begin{enumerate}[(1)]
\item $Q, R \subseteq \widetilde{A}^{(k+1)}$,
\item $Q\subseteq \widetilde{A}^{(k+1)}, R\subset \widetilde{B}^{(k+1)}$ and its symmetric version with $Q,R$ interchanged,
\item $Q \subset \widetilde{A}^{(k+1)}, R\subseteq N$ and its symmetric version,
\item $Q, R \subseteq \widetilde{B}^{(k+1)}$,
\item $Q \subseteq \widetilde{B}^{(k+1)}, R\subset N$ and its symmetric version, and
\item $Q,R\subseteq N$.
\end{enumerate}

\textbf{Case 1:} Observing that $\mu(\widetilde{A}^{(k + 1)} \setminus A) = \nu(\widetilde{A}^{(k + 1)} \setminus A) = 0$, this reduces to the case $Q = R = A$ and yields $$\Delta \mathcal{Q}_{\text{Case 1}} = -\frac 1 4 r_n \mu(B)^2.$$

\textbf{Case 2:} If $Q \subseteq \widetilde{A}^{(k + 1)}$ and $R \subseteq \widetilde{B}^{(k + 1)},$ as in Case 1, the only surviving terms correspond to $Q = A$. Note that since $Q$ and $R$ have the same ancestor at generation $k$ but not $k + 1$, we have $r(Q, R) = r_{k}$. This yields
\[
\dyadsubsum R E n {\widetilde{B}^{(k + 1)}} r_{k} \left[0 - \nu(Q) \nu(R)\right] \\
= -\frac 1 2 r_{k} \mu(B) \nu(\widetilde{B}^{(k + 1)}).
\]
Here we use that $\nu(A) = \frac{1}{2}\mu(B)$. The symmetric case is identical, yielding 
\[
\Delta \mathcal{Q}_{\text{Case 2}} = - r_k \mu(B) \mu(B) \nu(\widetilde{B}^{(k+1)}).
\]

\textbf{Case 3:} If $Q \subseteq \widetilde{A}^{(k + 1)}$ and $R \subseteq N$, $\mu(R) = \nu(R)$ and terms with $Q \ne A$ vanish. This leaves
$$ -\frac 1 2 \mu(B) \dyadsubsum R E n N r(A, R)\mu(R).$$ 
The symmetric case is identical, yielding 
\[
\Delta \mathcal{Q}_{\text{Case 3}} = - \mu(B) \dyadsubsum R E n N r(A, R)\mu(R).
\]


\textbf{Case 4:} $Q, R \subseteq \widetilde{B}^{(k + 1)}$. The key observation of for this case is that whenever $S \in \dyad E n \cap (A \cup B)^c$ we have $\nu(S) = \mu(S)$. If $Q = R = B$, we have
$$\mu(Q) \mu(R) - \nu(Q) \nu(R) = \mu(B)^2 - \nu(B)^2 = \frac 3 4 \mu(B)^2.$$
Otherwise, write
$$\mu(Q) \mu(R) - \nu(Q) \nu(R) = \mu(Q) \big(\mu(R) - \nu(R)\big) + \nu(R) \big(\mu(Q) - \nu(Q)\big).$$
If $Q = B$ but $R \ne B$, the first term vanishes and this equates to
$$\frac 1 2 \mu(B) \mu(R).$$
The case that $Q \ne B$ and $R = B$ is handled identically. 
Finally, if $Q \ne B$ and $R \ne B$, both terms vanish. Combining these (and observing the symmetry between the cases $Q = B, R \ne B$ and $Q \ne B, R = B$) we are left with
\begin{align*}
\Delta \mathcal{Q}_{\text{Case 4}} &= \frac 3 4 \mu(B)^2 r_n + \mu(B) \dyadsubsum R E n {\widetilde{B}^{(k + 1)} \setminus B} r(B, R) \mu(R).
\end{align*}

\textbf{Case 5:} If $Q \subseteq \widetilde{B}^{(k + 1)}$ and $R \subseteq N$, as in Case 3, we have $\mu(R) = \nu(R)$. Furthermore, $\mu(Q) = \nu(Q)$ for all $Q \ne B$, so we are left with
\begin{align*}
\dyadsubsum Q E n {\widetilde{B}^{(k + 1)}} \dyadsubsum R E n N r(Q, R) \mu(R) \left(\mu(Q) - \nu(Q)\right) = \frac 1 2 \mu(B) \dyadsubsum R E n N r(B, R) \mu(R).
\end{align*}
The symmetric version is identical, yielding
\[
\Delta \mathcal{Q}_{\text{Case 5}} = \mu(B) \dyadsubsum R E n N r(B, R) \mu(R)
\]



\textbf{Case 6:} If $Q, R \subseteq N$, $\mu(Q) \mu(R) = \nu(Q) \nu(R)$ and there is zero contribution.

We are now ready to combine all the cases:
\begin{align*}
\Delta \mathcal{Q} &= \underbrace{-\frac 1 4 r_n \mu(B)^2}_{\text{Case 1}} - \underbrace{r_{k} \mu(B) \nu(\widetilde{B}^{(k + 1)})}_{\text{Case 2}} \\
&\quad - \underbrace{\mu(B) \dyadsubsum R E n N r(A, R) \mu(R)}_{\text{Case 3}} + \underbrace{\frac 3 4 \mu(B)^2 r_n + \dyadsubsum R E n {\widetilde{B}^{(k + 1)} \setminus B} r(B, R) \mu(R)}_{\text{Case 4}} \\
&+ \underbrace{\mu(B) \dyadsubsum R E n N r(B, R) \mu(R)}_{\text{Case 5}} \\
&= \frac 1 2 r_n \mu(B)^2 - r_{k} \mu(B) \nu(\widetilde{B}^{(k + 1)}) + \dyadsubsum R E n N \big(r(B, R) - r(A, R)\big) \mu(R) \\
&\quad + \mu(B) \dyadsubsum R E n {\widetilde{B}^{(k + 1)} \setminus B} r(B, R) \mu(R).
\end{align*}
For any $R \subseteq N$, we have that $r(B, R) = r(A, R)$ and so the third term is zero. The second term can be rewritten using
$$\nu(\widetilde{B}^{(k + 1)}) = \nu(\widetilde{B}^{(k + 1)} \setminus B) + \nu(B) = \mu(\widetilde{B}^{(k + 1)} \setminus B) + \frac 1 2 \mu(B).$$
Therefore,
$$\Delta \mathcal{Q} = \frac 1 2 r_n \mu(B)^2 - r_{k} \mu(B) \left(\frac 1 2 \mu(B) + \mu(\widetilde{B}^{(k + 1)} \setminus B)\right) + \mu(B) \dyadsubsum R E n {\widetilde{B}^{(k + 1)} \setminus B} r(B, R) \mu(R).$$
As a check on this formula, it is worth comparing it to the case that $A$ and $B$ are siblings. In that case we would take $k + 1 = n$; the third term degenerates and the first two reduce to $\frac{1}{2} \left(r_n - r_{n - 1}\right) \mu(B)^2$ as before.

To complete the proof, we need to show that $\Delta \mathcal{Q} > 0$. The challenge is the negative middle term; this corresponds to a loss in repulsion by redistributing mass away from $B$. This is compensated for in the final sum. To this end, note that if $R \subseteq \widetilde{B}^{(k + 1)} \setminus B$ there is a common ancestor between $R$ and $B$ at a generation which is at least $k + 1$; that is, $r(B, R) \ge r_{k + 1}$. As a consequence,
\begin{align*}
\dyadsubsum R E n {\widetilde{B}^{(k + 1)} \setminus B} r(B, R) \mu(R) &\ge r_{k + 1} \dyadsubsum R E n {\widetilde{B}^{(k + 1)} \setminus B} \mu(R) = r_{k + 1} \mu(B^{k + 1} \setminus B).
\end{align*}
Finally, we have
\begin{align*}
\Delta \mathcal{Q} &\ge \frac 1 2 r_n \mu(B)^2 - \frac 1 2 r_k \mu(B)^2 - r_k \mu(B) \mu(\widetilde{B}^{(k + 1)} \setminus B) + r_{k + 1} \mu(B) \mu(\widetilde{B}^{(k + 1)} \setminus B) \\
&= \frac{1}{2} (r_n - r_k) \mu(B)^2 + (r_{k + 1} - r_k) \mu(B) \mu(\widetilde{B}^{(k + 1)} \setminus B) .
\end{align*}
Since $r_n > r_{k + 1} > r_k$ and $\mu(B) > 0$, we have $\Delta \mathcal{Q} > 0$ as desired. 
\end{proof}

\subsection{Local equidistribution of repulsion minimizers} \label{RepulsionStep2}

\begin{defn}[Mass-exchanged measure]
Given sets $A$ and $B$ and a measure $\mu$ for which $\mu(A) > 0$ and $\mu(B) > 0$, the \emph{mass-exchanged measure} $\tilde \mu$ is the measure for which
\begin{itemize}
\item $\left.\tilde \mu\right|_{(A \cup B)^c} = \left.\mu\right|_{(A \cup B)^c}$
\item $\left.\tilde \mu\right|_{A} = \frac{\mu(A) + \mu(B)}{2 \mu(A)} \left.\mu\right|_A,$
\item and $\left.\tilde \mu\right|_{B} = \frac{\mu(A) + \mu(B)}{2 \mu(B)} \left.\mu\right|_B$.
\end{itemize}
\end{defn}
In other words, $\tilde \mu$ is the measure which is locally scaled on $A$ and $B$ to ensure that they have equal mass, while remaining unmodified on the remainder of the support of $\mu$. As a notational comment, the exchange $\tilde \mu$ implicitly depends on both $A$ and $B$; when the mass-exchanged sets are clear from context, we do not explicitly notate this.
\begin{lemma}[Mass-exchange] Fix natural numbers with $0 \le k \le n$ and suppose that $E$ has the filtration property through generation $n$. Furthermore, fix an increasing sequence $\{r_{\ell} : 0 \le \ell \le n\}$.

Suppose that $\mu$ is a positive Borel probability measure supported on $E$ and that $A, B \in \dyad E {k+1}$ with $\tilde A = \tilde B$ but $A \ne B$, and with $\mu(A) > 0$ and $\mu(B) > 0$. If $\tilde{\mu}$ is the corresponding mass-exchanged measure,
\begin{align*}
\mathcal{Q}(\mu) - \mathcal{Q}(\tilde \mu) &= \sum_{k \le \ell \le n - 1} (r_{\ell + 1} - r_{\ell}) \left(\dyadsubsum P E {\ell + 1} A \left[1 - \left(\frac{\mu(A) + \mu(B)}{2\mu(A)}\right)^2 \right] \mu(P)^2\right. \\
&\hspace{.2in} \left.+ \dyadsubsum P E {\ell + 1} B \left[1 - \left(\frac{\mu(A) + \mu(B)}{2\mu(B)}\right)^2 \right] \mu(P)^2\right)
\end{align*}
where $\mathcal{Q}$ is the repulsion implicitly depending on the sequence $\{r_{\ell} : 0 \le \ell \le n\}.$
\end{lemma}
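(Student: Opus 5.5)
The plan is to rewrite the repulsion as a weighted sum of squared masses over generations, and then compare $\mu$ and $\tilde\mu$ one generation at a time. The key observation is a telescoping identity for the kernel. For $Q, R \in \dyad E n$, let $m(Q,R)$ be the largest generation at which $\widetilde{Q}^{(m)} = \widetilde{R}^{(m)}$. Then
\[
r(Q,R) = r_{m(Q,R)} = r_0 + \sum_{\ell=0}^{n-1} (r_{\ell+1}-r_\ell)\, \mathbf{1}\bigl[\widetilde{Q}^{(\ell+1)} = \widetilde{R}^{(\ell+1)}\bigr].
\]
Two facts make this valid. First, sharing an ancestor at generation $\ell+1$ forces sharing one at every earlier generation, since ancestors are unique. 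Second, the term $r_0$ accounts for generation $0$. If $\dyad E 0$ contains several sets, I would either add the analogous indicator for generation $0$ or note that the argument below treats that level exactly like the levels $\ell+1 \le k$.

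Inserting this identity into the definition of $\mathcal{Q}(\mu)$ and exchanging the order of summation, I will regroup the pairs $(Q,R)$ according to their common generation-$(\ell+1)$ ancestor $P$. This uses that each $\dyad E {\ell+1}$ consists of pairwise disjoint sets, and that every $Q\in\dyad E n$ lies in exactly one of them, so $\sum_{Q \subseteq P} \mu(Q) = \mu(P)$. The result is
\[
\mathcal{Q}(\mu) = r_0\, \mu(E)^2 + \sum_{\ell=0}^{n-1} (r_{\ell+1}-r_\ell) \sum_{P\in\dyad E{\ell+1}} \mu(P)^2,
\]
and the same formula holds for $\tilde\mu$.

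Next I compare $\mu(P)$ and $\tilde\mu(P)$ generation by generation. The mass-exchange preserves total mass, since $\tilde\mu(A)+\tilde\mu(B) = \mu(A)+\mu(B)$, so the $r_0$ terms cancel. Now take $\ell+1 \le k$ and $P \in \dyad E {\ell+1}$. Because $A$ and $B$ have the same generation-$k$ parent, they also have the same ancestor at every generation $\le k$. Hence $P$ contains both $A$ and $B$ or is disjoint from both, and in either case $\tilde\mu(P)=\mu(P)$. These generations therefore contribute nothing.

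For $\ell \ge k$, each $P\in\dyad E{\ell+1}$ lies in $A$, lies in $B$, or is disjoint from $A\cup B$, by uniqueness of ancestors and disjointness within a generation. In the three cases
\[
\tilde\mu(P)=\tfrac{\mu(A)+\mu(B)}{2\mu(A)}\,\mu(P), \qquad \tilde\mu(P)=\tfrac{\mu(A)+\mu(B)}{2\mu(B)}\,\mu(P), \qquad \tilde\mu(P)=\mu(P)
\]
respectively. Subtracting the squared masses then produces exactly the bracketed factors in the statement. The case $\ell=k$ is included automatically, because then $P=A$ or $P=B$ is the only set in the relevant sum.

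The only real obstacle is the bookkeeping in the regrouping step. That step needs $\dyad E{\ell+1}$ to partition $E$, and it needs the generation-$0$ level handled correctly when there is more than one root. The rest is direct substitution.
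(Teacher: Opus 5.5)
Your argument is correct, but it takes a different route from the paper's.

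The paper never rewrites $\mathcal{Q}$ itself. Instead it expands $\mathcal{Q}(\mu)-\mathcal{Q}(\tilde\mu)$ over pairs $(Q,R)$ in six cases, according to whether each lies in $A$, in $B$, or outside $A\cup B$. It uses summation by parts only for pairs inside $A$ (resp.\ $B$). It then computes the $A$--$B$ cross term $-\frac12 r_k(\mu(A)-\mu(B))^2$ and checks that the $A$--outside and $B$--outside contributions cancel. Finally it removes the leftover $r_k$ terms via $a^2+b^2-\frac12(a-b)^2-\frac12(a+b)^2=0$.

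You apply the same telescoping globally and obtain the closed formula $\mathcal{Q}(\mu)=r_0\,\mu(E)^2+\sum_{\ell=0}^{n-1}(r_{\ell+1}-r_\ell)\sum_{P\in\mathcal{D}_{E,\ell+1}}\mu(P)^2$. After that, every cancellation in the paper becomes structural. All of them reduce to the single fact that sets of generation at most $k$ keep their mass under the exchange; the paper's $r_k$ identity is just conservation of $(\mu(A)+\mu(B))^2$.

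The paper's case analysis stays closer to the pairwise-interaction picture, but your identity buys considerably more. Under the socialist filtration property, Cauchy--Schwarz gives $\sum_{P\in\mathcal{D}_{E,\ell+1}}\mu(P)^2\ge 1/\#\mathcal{D}_{E,\ell+1}$. Equality holds exactly when all generation-$(\ell+1)$ sets have equal mass, and the uniform measure on $\mathcal{D}_{E,n}$ attains equality at every level at once. Since each $r_{\ell+1}-r_\ell>0$, any minimizer must be uniform on $\mathcal{D}_{E,n}$. This proves Theorem~\ref{thm:socialist} directly, without the non-degeneracy lemma or the bootstrapping, and also gives the minimal value of $\mathcal{Q}$ explicitly.

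Your caveat about several generation-$0$ sets is harmless, since the paper's definition of $r(Q,R)$ already presupposes a common generation-$0$ ancestor.
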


\begin{proof}
We will compute $\mathcal{Q}(\mu) - \mathcal{Q}(\tilde \mu)$ and show that it is nonnegative; to this end, we have
\begin{align*}
\Delta \mathcal Q = \mathcal Q(\mu) - \mathcal Q(\tilde \mu) &= \dyadsum Q E n \dyadsum R E n \big[\mu(Q) \mu(R) - \tilde \mu(Q) \tilde \mu(R)\big] r(Q, R).
\end{align*}

 Note that the mass-exchanged measure $\tilde \mu$ is unmodified on $E \setminus (A \cup B)$; we now consider the different conditions that are possible for squares $Q, R \in \dyad E n$. There are a total of nine different cases, based on inclusion in $A$, $B$, and $(A \cup B)^c$; this is immediately reduced to six cases by observing the relevant symmetries:
\begin{enumerate}[(1)]
\item $Q \subseteq A$, $R \subseteq A$,
\item $Q \subseteq B$, $R \subseteq B$,
\item $Q \subseteq A$, $R \subseteq B$ and its symmetric version with $Q$ and $R$ interchanged,
\item $Q \subseteq A$, $R \nsubseteq (A \cup B)$ and its symmetric version,
\item $Q \subseteq B$, $R \nsubseteq (A \cup B)$ and its symmetric version, and
\item $Q \nsubseteq (A \cup B)$, $R \nsubseteq (A \cup B).$
\end{enumerate}

{\textbf{Case 1:}} If $Q$ and $R$ are both subsets of $A$, there is a unique last common ancestor $P \in \dyad E {\ell}$ for some index $k \le \ell \le n$. That is, we have $\widetilde{Q}^{(\ell)} = \widetilde{R}^{(\ell)} = P$ but $\widetilde{Q}^{(\ell + 1)} \ne \widetilde{R}^{(\ell + 1)}$. (We address the case when $Q = R$ momentarily.) In this case, we have
\begin{align*}
\left[\mu(Q) \mu(R) - \tilde \mu(Q) \tilde \mu(R)\right] r(Q, R) &= \left[\mu(Q) \mu(R) - \left(\frac{\mu(A) + \mu(B)}{2 \mu(A)} \mu(Q)\right) \cdot \left(\frac{\mu(A) + \mu(B)}{2 \mu(A)} \mu(R) \right)\right] r_{\ell} \\ 
&= \left[1 - \left(\frac{\mu(A) + \mu(B)}{2 \mu(A)}\right)^2\right] \mu(Q) \mu(R) r_{\ell}.
\end{align*}
Now fix a $Q$ and a scale $\ell$; in this case,
\begin{align*}
\sum_{\substack{R \in \dyad E n \\ R \subseteq A \\ r(Q, R) = r_{\ell}}} \left[\mu(Q) \mu(R) - \tilde \mu(Q) \tilde \mu(R)\right] r(Q, R) &= \left[1 - \left(\frac{\mu(A) + \mu(B)}{2\mu(A)}\right)^2 \right] \mu(Q) r_{\ell} \sum_{\substack{R \in \dyad E n \\ R \subseteq A \\ r(Q, R) = r_{\ell}}} \mu(R).
\end{align*}
Note that when $\ell = k$, the fact that $A \in \dyad E {k + 1}$ implies that the sum is empty and therefore contributes nothing. For terms with $\ell \ge k + 1$, the condition $R \subseteq A$ is now redundant. Because $Q$ and $R$ have a common ancestor $P \in \dyad E {\ell}$ but are not contained in the same child of $P$, the lattermost sum is
\begin{align*}
\sum_{\substack{R \in \dyad E n \\ r(Q, R) = r_{\ell}}} \mu(R) &= \sum_{\substack{S \in \operatorname{ch}(P) \\ S \ne \widetilde{Q}^{\ell + 1}}} \sum_{\substack{R \in \dyad E n \\ R \subseteq S}} \mu(R) \\
&= \sum_{\substack{S \in \operatorname{ch}(P) \\ S \ne \widetilde{Q}^{\ell + 1}}} \mu(S) \\
&= \mu(P) - \mu(\widetilde{Q}^{(\ell + 1)})
\end{align*}
where we have denoted $\operatorname{ch}(P)$ as the immediate children of $S$. Therefore, once we have our $Q$ and $\ell$ fixed, and recalling that $P = \widetilde{Q}^{\ell}$, we have
$$\quad \sum_{\substack{R \in \dyad E n \\ R \subseteq A \\ r(Q, R) = r_{\ell}}} \left[\mu(Q) \mu(R) - \tilde \mu(Q) \tilde \mu(R)\right] r(Q, R) = \left[1 - \left(\frac{\mu(A) + \mu(B)}{2\mu(A)}\right)^2 \right] \mu(Q) r_{\ell} \cdot \big(\mu(\widetilde{Q}^{(\ell)}) - \mu(\widetilde{Q}^{(\ell + 1)})\big).$$
In the case that $Q = R$, the computation is much simpler because the sum over $R$ degenerates into the single term $\mu(Q)$. As we interpret $\tilde{Q}^{(n + 1)} = \emptyset$, the above equation remains valid even with $\ell = n$. Again with $Q$ fixed, we may now apply summation by parts over scales $k + 1 \le \ell \le n$:
\begin{align*}
\sum_{k+1 \le \ell \le n} r_{\ell} \cdot &\big(\mu(\widetilde{Q}^{(\ell)}) - \mu(\widetilde{Q}^{(\ell + 1)})\big) \\
&= \sum_{k+1 \le \ell \le n} (-r_{\ell}) \cdot (\mu(\widetilde{Q}^{(\ell + 1)}) - \mu(\widetilde{Q}^{(\ell)})) \\
&= (-r_{n + 1}) \mu(\widetilde{Q}^{(n + 1)}) - (-r_k)(\mu(\widetilde{Q}^{(k)}) - \sum_{k \le \ell \le n} \mu(\widetilde{Q}^{(\ell + 1)}) \cdot ((-r_{\ell + 1}) - (-r_{\ell})) \\
&= r_k \mu(\widetilde{Q}^{(k)}) + \sum_{k \le \ell \le n} \mu(\widetilde{Q}^{(\ell + 1)}) (r_{\ell + 1} - r_{\ell}).
\end{align*}
Observe that we have an ascending chain
$$\emptyset = \widetilde{Q}^{(n + 1)} \subset \widetilde{Q}^{(n)} \subset \cdots \subset \widetilde{Q}^{(k)} = A$$
induced by $Q \in \dyad E n$, ordered by decreasing generation (and thus increasing scale). Returning to the original goal of this case (that is, to compute the change in $\mathcal{Q}$ as we exchange mass within the measure), we have
\begin{align*}
\Delta_{\text{Case 1}} \mathcal{Q} &= \dyadsubsum Q E n A \dyadsubsum R E n A \left[\mu(Q) \mu(R) - \tilde \mu(Q) \tilde \mu(R)\right] r(Q, R) \\
&= \left[1 - \left(\frac{\mu(A) + \mu(B)}{2\mu(A)}\right)^2\right] \dyadsubsum Q E n A \mu(Q) \sum_{k \le \ell \le n} \sum_{\substack{R \in \dyad E n \\ R \subseteq A \\ r(Q, R) = r_{\ell}}} \mu(R) \cdot r_{\ell} \\
&= \left[1 - \left(\frac{\mu(A) + \mu(B)}{2\mu(A)}\right)^2\right] \dyadsubsum Q E n A \mu(Q) \left[r_k \mu(\widetilde{Q}^{(k)}) + \sum_{k \le \ell \le n} \mu(\widetilde{Q}^{(\ell + 1)}) (r_{\ell + 1} - r_{\ell})\right].
\end{align*}
For the first component of this sum, we may use the fact that $\widetilde{Q}^{(k)} = A$ to calculate
$$\dyadsubsum Q E n A \mu(Q) \cdot r_k \cdot \mu(\widetilde{Q}^{(k)}) = \mu(A) \cdot r_k \cdot \dyadsubsum Q E n A \mu(Q) = r_k \cdot \mu(A)^2.$$
The second term may be handled using a partitioning idea after changing the order of summation. Note that there is no contribution from $\ell = n$ by our convention that $\widetilde{Q}^{(n + 1)} = \emptyset$:
\begin{align*}
\dyadsubsum Q E n A \sum_{k \le \ell \le n} \mu(Q) \cdot \mu(\widetilde{Q}^{(\ell + 1)}) \cdot (r_{\ell + 1} - r_{\ell}) &= \sum_{k \le \ell \le n - 1} (r_{\ell + 1} - r_{\ell}) \dyadsubsum Q E n A \mu(Q) \cdot \mu(\widetilde{Q}^{(\ell + 1)}) \\
&= \sum_{k \le \ell \le n - 1} (r_{\ell + 1} - r_{\ell}) \sum_{\substack{P \in \dyad E {\ell + 1} \\ P \subseteq A}} \dyadsubsum Q E n P \mu(Q) \cdot \mu(P) \\
&= \sum_{k \le \ell \le n - 1} (r_{\ell + 1} - r_{\ell}) \dyadsubsum P E {\ell + 1} A \mu(P)^2.
\end{align*}
Finally, this yields that the overall change in $\mathcal{Q}$ from Case 1 is
$$\Delta_{\text{Case 1}} \mathcal{Q} = \left[1 - \left(\frac{\mu(A) + \mu(B)}{2\mu(A)}\right)^2\right] \cdot \left[r_k \cdot \mu(A)^2 + \sum_{k \le \ell \le n - 1} (r_{\ell + 1} - r_{\ell}) \dyadsubsum P E {\ell + 1} A \mu(P)^2\right].$$

\textbf{Case 2:} We may follow nearly identical reasoning to Case 1 with $B$ in the role of $A$. 
Following identical reasoning to Case 1, the overall change in $\mathcal{Q}$ from the $Q, R \subseteq B$ terms is
$$\Delta_{\text{Case 2}} \mathcal{Q} = \left[1 - \left(\frac{\mu(A) + \mu(B)}{2\mu(B)}\right)^2\right] \cdot \left[r_k \cdot \mu(B)^2 + \sum_{k \le \ell \le n - 1} (r_{\ell + 1} - r_{\ell}) \dyadsubsum P E {\ell + 1} B \mu(P)^2\right].$$

\textbf{Case 3:} Consider first the case that $Q \subseteq A$ and $R \subseteq B$. By the mass-exchange process, we have
\begin{align*}
\mu(Q) \mu(R) - \tilde \mu(Q) \tilde \mu(R) &= \mu(Q) \mu(R) - \left(\frac{\mu(A) + \mu(B)}{2 \mu(A)} \mu(Q)\right) \cdot \left(\frac{\mu(A) + \mu(B)}{2 \mu(B)} \mu(R)\right) \\
&= \left(1 - \frac{(\mu(A) + \mu(B))^2}{4 \mu(A) \mu(B)}\right) \mu(Q) \mu(R) \\
&= \left(\frac{4 \mu(A) \mu(B) - \mu(A)^2 - 2\mu(A) \mu(B) - \mu(B)^2}{4 \mu(A) \mu(B)}\right) \mu(Q) \mu(R) \\
&= \left(\frac{2 \mu(A) \mu(B) - \mu(A)^2 - \mu(B)^2}{4 \mu(A) \mu(B)}\right) \mu(Q) \mu(R) \\
&= - \frac{(\mu(A) - \mu(B))^2}{4 \mu(A) \mu(B)} \mu(Q) \mu(R).
\end{align*}
The corresponding change in $\mathcal{Q}$ is therefore
$$- \frac{(\mu(A) - \mu(B))^2}{4 \mu(A) \mu(B)} \dyadsubsum Q E n A \dyadsubsum R E n B \mu(Q) \mu(R) r(Q, R).$$
The symmetrized version yields the exact same result with $Q$ and $R$ interchanged, so the overall change in $\mathcal{Q}$ for this case is
$$\Delta_{\text{Case 3}} \mathcal{Q} = - \frac{(\mu(A) - \mu(B))^2}{2 \mu(A) \mu(B)} \dyadsubsum Q E n A \dyadsubsum R E n B \mu(Q) \mu(R) r(Q, R).$$
Now observe that the configuration of $Q \subseteq A$ and $R \subseteq B$ with $A \ne B$ but $\tilde{A} = \tilde{B}$ yields that $r(Q, R) = r_k$. Therefore,
\begin{align*}
\Delta_{\text{Case 3}} \mathcal{Q}  &= - \frac{(\mu(A) - \mu(B))^2}{2 \mu(A) \mu(B)} \cdot r_{k} \dyadsubsum Q E n A \dyadsubsum R E n B \mu(Q) \mu(R) \\
&= - \frac{(\mu(A) - \mu(B))^2}{2 \mu(A) \mu(B)}\cdot r_{k} \mu(A) \mu(B) \\
&= - \frac{1}{2} r_k(\mu(A) - \mu(B))^2.
\end{align*}

\textbf{Case 4:} First assume that $Q \subseteq A$ so that $R \nsubseteq (A \cup B)$. In this case,
$$\left[\mu(Q) \mu(R) - \tilde \mu(Q) \tilde \mu(R)\right] r(Q, R) = \left[1 - \left(\frac{\mu(A) + \mu(B)}{2 \mu(A)}\right)\right] \mu(Q) \mu(R) r(Q, R).$$
Note that because $Q \subseteq A$ and $R \nsubseteq A$, the function $Q \mapsto r(Q, R)$ is constant. Fix any $S \in \dyad E n$ which is not contained in $A \cup B$, and suppose that $\ell$ is the index for which $r(Q, S) = r_{\ell}$ for all $Q \subseteq A$ (with $1 \le \ell \le k$). We find that
\begin{align*}
\dyadsubsum Q E n A \left[\mu(Q) \mu(S) - \tilde{\mu}(Q) \tilde{\mu}(S)\right] r(Q, S) &= \left[1 - \left(\frac{\mu(A) + \mu(B)}{2 \mu(A)}\right)\right] \dyadsubsum Q E n A \mu(Q) \mu(S) r_{\ell} \\
&= \left[1 - \left(\frac{\mu(A) + \mu(B)}{2 \mu(A)}\right)\right] \mu(A) \mu(S) r_{\ell} \\
&= \left[\frac{\mu(A) - \mu(B)}{2\mu(A)}\right] \mu(A) \mu(S) r_{\ell} \\
&= \frac 1 2 (\mu(A) - \mu(B)) \mu(S) r_{\ell}.
\end{align*}
The symmetric terms, corresponding to $R \subseteq A$ and $Q \nsubseteq (A \cup B)$ are handled identically and yields the same result. This leads to
$$(\mu(A) - \mu(B)) \mu(S) r_{\ell}.$$
Summing over $\ell$ and $S$ (at the appropriate scale determined by $\ell$) yields the overall term, but we will delay this until the computation in Case 5.

\textbf{Case 5:} We proceed as in Case 4, by first assuming that $Q \subseteq B$ and $R \nsubseteq (A \cup B)$. Observe that
$$\left[\mu(Q) \mu(R) - \tilde \mu(Q) \tilde \mu(R)\right] r(Q, R) = \left[1 - \left(\frac{\mu(A) - \mu(B)}{2 \mu(B)}\right)\right] \mu(Q) \mu(R) r(Q, R).$$ 
As before, fix any $S \in \dyad E n$ with $S \nsubseteq A \cup B$; we find
\begin{align*}
\dyadsubsum Q E n B \left[\mu(Q) \mu(S) - \tilde \mu(Q) \tilde \mu(S)\right] r(Q, S) = \frac 1 2 (\mu(B) - \mu(A)) \dyadsubsum Q E n B \mu(Q) \mu(S) r_{\ell} 
= \frac 1 2 (\mu(B) - \mu(A)) \mu(S) r_{\ell}.
\end{align*}
The symmetric term with $Q\subsetneq (A \cup B)$ and $R \subseteq B$ contributes the same, yielding
$$(\mu(B) - \mu(A)) \mu(S) r_{\ell}.$$
This is precisely the negative of the corresponding term in Case 4, and the summands cancel. Thus the overall contribution from Cases 4 and 5 is zero.

\textbf{Case 6:} Here the mass-exchanged measure is completely unmodified, in that $\tilde \mu(Q) = \tilde \mu(R)$; this contribution is therefore zero.

We are now ready to combine the contributions from all six cases. 
\begin{align*}
\mathcal Q(\mu) - \mathcal Q(\tilde \mu) &= \left[1 - \left(\frac{\mu(A) + \mu(B)}{2 \mu(A)}\right)^2\right] \cdot \left[ r_k \cdot \mu(A)^2 + \sum_{k \le \ell \le n - 1} (r_{\ell + 1} - r_{\ell}) \dyadsubsum P E {\ell + 1} A \mu(P)^2 \right] \\
&\hspace{.2in} + \left[1 - \left(\frac{\mu(A) + \mu(B)}{2 \mu(B)}\right)^2\right] \cdot \left[ r_k \cdot \mu(B)^2 + \sum_{k \le \ell \le n - 1} (r_{\ell + 1} - r_{\ell}) \dyadsubsum P E {\ell + 1} B \mu(P)^2 \right] \\
&\hspace{.2in} - \frac 1 2 r_k \cdot (\mu(A) - \mu(B))^2 \\
&= \left[\mu(A)^2 + \mu(B)^2 - \frac{(\mu(A) - \mu(B))^2}{2} - 2 \left(\frac{\mu(A) + \mu(B)}{2}\right)^2\right] \cdot r_k \\
&\hspace{.2in} + \left[1 - \left(\frac{\mu(A) + \mu(B)}{2 \mu(A)}\right)^2\right] \sum_{k \le \ell \le n - 1} (r_{\ell + 1} - r_{\ell}) \dyadsubsum P E {\ell + 1} A \mu(P)^2 \\
&\hspace{.2in}  + \left[1 - \left(\frac{\mu(A) + \mu(B)}{2 \mu(B)}\right)^2\right] \sum_{k \le \ell \le n - 1} (r_{\ell + 1} - r_{\ell}) \dyadsubsum P E {\ell + 1} B \mu(P)^2.
\end{align*}
The first term vanishes, after observing that
$$a^2 + b^2 - \frac 1 2 (a - b)^2 - \frac 1 2 (a + b)^2 = 0$$
for all $a$ and $b$. This leaves us with the overall result.
\end{proof}

We may use the mass-exchange lemma to build a sequence of $\mathcal{Q}$-minimizing measures supported on the generations of a set $E$, and show that the $\mathcal{Q}$-minimizing measures have a particular equidistribution property. In the case that $k = n - 1$ (i.e. that $A$ and $B$ are the parents of two of the constituents of $E$), the lemma reduces to
\begin{align*}
\mathcal{Q}(\mu) - \mathcal{Q}(\tilde \mu) &= (r_n - r_{n - 1}) \left[\left[1 - \left(\frac{\mu(A) + \mu(B)}{2 \mu(A)}\right)^2\right] \mu(A)^2 + \left[1 - \left(\frac{\mu(A) + \mu(B)}{2 \mu(B)}\right)^2\right] \mu(B)^2\right] \\
&= (r_n - r_{n - 1}) \left[\mu(A)^2 + \mu(B)^2 - 2 \left(\frac{\mu(A) + \mu(B)}{2}\right)^2\right] \\
&= \frac 1 2 (r_n - r_{n - 1}) \cdot (\mu(A) - \mu(B))^2.
\end{align*}
Since $E$ has the filtration property, $r_n > r_{n - 1}$; this means that the $\mathcal{Q}$-minimizing measure supported on $E$ must assign equal mass to each sibling with a common parent at scale $n - 1$, or else a local mass-exchange would further decrease $\mathcal{Q}$. This is the desired local equidistribution property.

\subsection{Bootstrapping to global equidistribution} \label{RepulsionStep3}

The final result of the previous section applies only at the finest scale: if $A$ and $B$ are siblings at generation $n$ and $\mu$ is (the) repulsion-minimizing measure, then $\mu(A) = \mu(B)$. The ideal result would be to bootstrap this to all other generations: if $A$ and $B$ are constituents of $E$ at any localization, $\mu(A) = \mu(B)$. There is an obstacle to this task, however: if $A$ contains far more children than $B$ does, we should not expect equidistribution of the measure. In fact, we would want to bias the measure \emph{against} a region of $E$ which has many children, because there are far more pair interactions at finer scales. As such, we restrict our attention to a special case: we assume that different regions of the set have the same growth rate in terms of the number of components at generation $n$. To this end, recall that if $E$ is a set with the filtration property through generation $n$, we say that $E$ has the \emph{socialist filtration property} if there is a sequence of nonnegative integers $N_{\ell}$ such that for every $P \in \dyad E \ell$,
$$\left|\left\{Q \in \dyad E {\ell + 1} : Q \subseteq P\right\}\right| = N_{\ell}.$$
In other words: the filtration property means that every component at generation $\ell$ has a unique ancestor at generation $\ell - 1$, whereas the \emph{socialist} filtration property requires that each component at generation $\ell$ has exactly $N_{\ell}$ children at generation $\ell + 1$. Note that $N_{\ell}$ is assumed to be a nonnegative integer which can be zero; when $N_{\ell} = 0$, the construction terminates at that scale.


\begin{lemma} Fix a set $E$ and a natural number $n$ for which $E$ has the socialist filtration property through generation $n$. Let $\mu$ be a positive probability measure on $E$ which minimizes the repulsion $\mathcal{Q}$ induced by an increasing sequence $\{r_{\ell} : 0 \le \ell \le n\}$. If $A$ and $B$ are siblings at generation $k$, meaning that $A, B \in \dyad E {k + 1}$ with $\tilde{A} = \tilde B$ but $A \ne B$, then $\mu(A) = \mu(B)$.
\end{lemma}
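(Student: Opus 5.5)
The proof is a downward induction on the generation $k$, running from $k = n-1$ down to $k = 0$. Each step combines the mass-exchange lemma with the socialist filtration property. The base case $k = n-1$ is the local equidistribution computation at the end of Section \ref{RepulsionStep2}. There, the mass-exchange lemma reduces to $\mathcal{Q}(\mu) - \mathcal{Q}(\tilde\mu) = \frac12 (r_n - r_{n-1})(\mu(A)-\mu(B))^2$, and minimality of $\mu$ forces $\mu(A) = \mu(B)$. Throughout, the non-degeneracy lemma of Section \ref{RepulsionStep1} guarantees $\mu(A) > 0$ and $\mu(B) > 0$ for every constituent at every generation, since each contains some element of $\dyad E n$. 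Hence the mass-exchanged measure $\tilde\mu$ is always defined, and it is again a positive Borel probability measure supported on $E$.

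For the inductive step, fix $k$ and assume that siblings at all generations $j$ with $k < j \le n-1$ have equal $\mu$-mass. Let $A, B \in \dyad E {k+1}$ be distinct siblings. I first use the socialist property: every $P \in \dyad E {k+1}$ has the same number
$$M_\ell = N_{k+1} N_{k+2} \cdots N_{\ell}$$
of descendants at generation $\ell + 1$, with $M_k = 1$. By the inductive hypothesis, mass splits equally among children at each generation below $k+1$. So every $P \in \dyad E {\ell+1}$ with $P \subseteq A$ has $\mu(P) = \mu(A)/M_\ell$, and therefore
$$\dyadsubsum P E {\ell+1} A \mu(P)^2 = \frac{\mu(A)^2}{M_\ell}.$$
The same identity holds for $B$ with the same $M_\ell$. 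This is exactly where the socialist hypothesis is needed, because it makes the weights for $A$ and $B$ match.

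Substituting into the mass-exchange lemma, write $a = \mu(A)$ and $b = \mu(B)$. Then
$$\mathcal{Q}(\mu) - \mathcal{Q}(\tilde\mu) = \sum_{k \le \ell \le n-1} \frac{r_{\ell+1} - r_\ell}{M_\ell}\left(\left[1 - \tfrac{(a+b)^2}{4a^2}\right]a^2 + \left[1 - \tfrac{(a+b)^2}{4b^2}\right]b^2\right).$$
The bracket simplifies to $a^2 + b^2 - \frac12(a+b)^2 = \frac12 (a-b)^2$, the same algebra as in the base case. This gives
$$\mathcal{Q}(\mu) - \mathcal{Q}(\tilde\mu) = \frac12 (a-b)^2 \sum_{k \le \ell \le n-1} \frac{r_{\ell+1} - r_\ell}{M_\ell}.$$
The sum is strictly positive: every term is nonnegative because $(r_\ell)$ is increasing, and the $\ell = k$ term equals $r_{k+1} - r_k > 0$. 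If $a \ne b$, then $\tilde\mu$ would have strictly smaller repulsion, contradicting minimality. Hence $a = b$, which closes the induction.

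The main point to handle carefully is the inductive bookkeeping rather than the algebra. The hypothesis must give equal mass to siblings at \emph{every} finer generation, not just the next one, so that $\mu(P) = \mu(A)/M_\ell$ holds for all $\ell$ simultaneously. The count $M_\ell$ must also be the same for $A$ and $B$. Without the socialist property the two weights $1/M_\ell^A$ and $1/M_\ell^B$ would differ, the bracket would no longer collapse to a perfect square, and the argument would fail, as the paper anticipates.
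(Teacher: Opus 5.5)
Your proposal is correct and follows the paper's proof. Like the paper, you run a downward induction from the finest generation, in which the mass-exchange lemma, equidistribution at all finer scales and the socialist property collapse $\mathcal{Q}(\mu)-\mathcal{Q}(\tilde\mu)$ to $\frac12(\mu(A)-\mu(B))^2\sum_{k\le\ell\le n-1}(r_{\ell+1}-r_\ell)/M_\ell$; your explicit use of the non-degeneracy lemma to guarantee $\mu(A),\mu(B)>0$, and your indexing of $M_\ell$ consistently with the mass-exchange lemma, tidy up points the paper leaves implicit or states with index slips.
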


\begin{proof}
We proceed recursively, working from the finest level ($\dyad E n$) to the roughest level ($\dyad E k$). To this end, suppose that $\mu$ is a repulsion-minimizing measure for the sequence $\{r_{\ell} : 0 \le \ell \le n\}$. At the finest level, choose $A, B \in \dyad E n$ with $A \ne B$ but $\widetilde A = \widetilde B$. As we noted above, if $\tilde{\mu}$ is the mass-exchanged measure corresponding to $\mu$, $A$, and $B$, the mass-exchange lemma implies
\begin{align*}
\mathcal{Q}(\mu) - \mathcal{Q}(\tilde \mu) = \frac 1 2 (r_n - r_{n - 1}) \cdot (\mu(A) - \mu(B))^2.
\end{align*}
Since $r_n > r_{n - 1}$ and $\mu$ minimizes the repulsion $\mathcal{Q}$, we must have $\mathcal{Q}(\mu) - \mathcal{Q}(\tilde \mu) \le 0$. This is only possible if $\mu(A) = \mu(B)$, establishing the most local equidistribution property.

Now consider $A, B \in \dyad E {n - 1}$ with $A \ne B$ but $\widetilde A = \widetilde B$. The mass-exchange lemma now yields two terms when moving from $\mu$ to $\tilde \mu$, corresponding to the two scales of the computation:
\begin{itemize}
\item $\ell = n - 2$: the contribution to $\mathcal{Q}(\mu) - \mathcal{Q}(\tilde \mu)$ is
$$(r_{n - 1} - r_{n - 2}) \left[\left(1 - \left(\frac{\mu(A) + \mu(B)}{2 \mu(A)}\right)^2\right) \dyadsubsum P E {n - 1} A \mu(P)^2 + \left(1 - \left(\frac{\mu(A) + \mu(B)}{2 \mu(B)}\right)\right)^2 \dyadsubsum P E {n - 1} B \mu(P)^2\right].$$
Because $A \in \dyad E {n - 1}$, the sum degenerates: there is only one $P$, and it is equal to $A$ itself. The second term is similarly degenerate, and the computation now proceeds exactly as in the base case. The overall contribution to $\mathcal{Q}(\mu) - \mathcal{Q}(\tilde \mu)$ is
$$\frac 1 2 (r_{n - 1} - r_{n - 2}) (\mu(A) - \mu(B))^2.$$
\item $\ell = n - 1$: the contribution to $\mathcal{Q}(\mu) - \mathcal{Q}(\tilde \mu)$ is
$$(r_{n} - r_{n - 1}) \left[\left(1 - \left(\frac{\mu(A) + \mu(B)}{2 \mu(A)}\right)^2\right) \dyadsubsum P E {n} A \mu(P)^2 + \left(1 - \left(\frac{\mu(A) + \mu(B)}{2 \mu(B)}\right)\right)^2 \dyadsubsum P E {n} B \mu(P)^2\right].$$
Consider the sum $\displaystyle \dyadsubsum P E n A \mu(P)^2$. Since we have already established equidistribution at this scale, we have $\mu(P) = \frac{1}{N_{\ell}} \mu(A)$ and thus
$$\dyadsubsum P E n A \mu(P)^2 = \left(\frac {\mu(A)} {N_{\ell}}\right)^2 \cdot N_{\ell} = \frac 1 {N_{\ell}} \mu(A)^2.$$
The second term, involving $B$, is handled identically. The overall contribution to $\mathcal{Q}(\mu) - \mathcal{Q}(\tilde \mu)$ is
$$\frac{1}{2 N_{\ell}} (r_n - r_{n - 1}) (\mu(A) - \mu(B))^2.$$
\end{itemize}
Combining these two terms yields
$$\mathcal{Q}(\mu) - \mathcal{Q}(\tilde \mu) = \frac 1 2 \left(\mu(A) - \mu(B)\right)^2 \cdot \left[(r_{n - 1} - r_{n - 2}) + \frac 1 {N_{\ell}} (r_n - r_{n - 1})\right].$$
As before: since $\mu$ is a repulsion minimzer, the left hand side must be at most zero and we have $\mu(A) = \mu(B)$. This is the desired equidistribution at the second finest scale.

Now consider the general case: suppose that $A, B \in \dyad E {\ell}$ for some $\ell \le n - 2$ and that $\mu$ is equidistributed on all finer scales. We split the contributions to $\mathcal{Q}(\mu) - \mathcal{Q}(\tilde \mu)$ into each scale $j \ge \ell$:
\begin{itemize}
\item If $j = \ell$,
$$\dyadsubsum P E {\ell} A \mu(P)^2 = \mu(A)^2$$
because the only summand corresponds to $P = A$. The sum with $B$ is handled similarly and the overall contribution to $\mathcal{Q}(\mu) - \mathcal{Q}(\tilde \mu)$ is $\frac 1 2 (r_{\ell + 1} - r_{\ell}) (\mu(A) - \mu(B)^2)$ and is strictly positive unless $\mu(A) = \mu(B)$.
\item If $j > \ell$, we need to calculate $\displaystyle \dyadsubsum P E {j} A \mu(P)^2$. Since we have already established equidistribution, the sum will reduce to $\frac 1 M_ \mu(A)^2$ where $M_j$ is the number of components at scale $j$ contained within $A$. To be explicit,
$$M_j = \prod_{\ell \le i < j} N_i.$$
The most important point here is that the corresponding sum over subsets of $B$ picks up the exact same factor $M_j$. Overall, the contribution to $\mathcal{Q}(\mu) - \mathcal{Q}(\tilde \mu)$ at scale $j$ is thus
$$\frac 1 {2 M_j} (r_j - r_{j - 1}) (\mu(A) - \mu(B))^2.$$
As before: this is strictly positive unless $\mu(A) = \mu(B)$.
\end{itemize}
Combining the previous two analyses, we have (note that $M_{\ell} = 1$ by the standard convention on the empty product) that
$$\mathcal{Q}(\mu) - \mathcal{Q}(\tilde \mu) = \frac 1 2 (\mu(A) - \mu(B))^2 \cdot \sum_{\ell \le j \le n} \frac{r_j - r_{j - 1}}{M_j}.$$
If $\mathcal{Q}(\mu)$ is minimal, the left hand side is at most zero; the right hand side is at least zero since $r_j > r_{j - 1}$ for all $j$. The only way these observations are compatible is if $\mu(A) = \mu(B)$, which is the desired result.
\end{proof}

In order to explain why the idea of the previous lemma works, consider that when we exchange mass between two regions, there is the effect at the primary scale ($\widetilde A = \widetilde B \in \dyad E {k}$) as well as an effect from any finer scale. Ordinarily, the finer scales would be an obstacle to the calculation; however, we already have established equidistribution and the sums (which are \emph{a priori} quite complicated) are in fact susceptible to the sharpest version of the Cauchy-Schwarz inequality.

We are now ready to bootstrap this to a global result: if $A$ and $B$ are any two components in $\dyad E k$, we have $\mu(A) = \mu(B)$ for a repulsion-minimizing measure. To see this, consider a common ancestor $P = \widetilde{A}^{(j)} = \widetilde{B}^{(j)}$ which contains both descendants. The previous lemma implies that $\mu(\widetilde A^{(j + 1)}) = \mu(\widetilde B^{(j + 1)})$. We can then use the equidistribution within $A^{(j + 1)}$ to conclude that
$$\mu(\widetilde{A}^{(j + 2)}) = \frac 1 {N_{j + 1}} \mu(\widetilde A^{(j + 1)})$$
and likewise for $\widetilde B^{(j + 2)}$. Carrying out this computation at each scale naturally leads to
$$\mu(A) = \left(\prod_{\ell = j + 1}^{k - 1} \frac 1 {N_{\ell}} \right) \mu(\widetilde A^{(j + 1)}) = \left(\prod_{\ell = j + 1}^{k - 1} \frac 1 {N_{\ell}} \right) \mu(\widetilde B^{(j + 1)}) = \mu(B).$$
Applying this at the final stage of the construction, we have proven the following theorem:
\begin{theorem} \label{thm:socialist}
Fix a set $E$ and a natural number $n$ for which $E$ has the socialist filtration property through generation $n$, as well as a sequence $\{r_{\ell} : 0 \le \ell \le n\}$. Let $\mu$ be a positive Borel probability measure supported on $E$ which minimizes the repulsion induced by $\mathcal{Q}$. If $A, B \in \dyad E n$, we have $\mu(A) = \mu(B).$
\end{theorem}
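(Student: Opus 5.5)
The plan is to deduce the theorem from the sibling-equidistribution lemma of Section~\ref{RepulsionStep3}, using a short induction on the generation. The sibling lemma is stated for every $0 \le k \le n-1$ and rests only on the mass-exchange lemma and the socialist filtration property. It therefore tells us that any two siblings at any generation carry equal $\mu$-mass. The mass-exchange lemma needs $\mu(A), \mu(B) > 0$; I would supply this from the non-degeneracy lemma of Section~\ref{RepulsionStep1}, which gives $\mu(Q) > 0$ for every $Q \in \dyad E n$ and hence for every ancestor. We also need the $r_\ell$ to be strictly increasing, as assumed throughout.

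Fix $A, B \in \dyad E n$. I will prove by induction on $\ell$ that every $P \in \dyad E \ell$ satisfies
$$\mu(P) = \prod_{i=0}^{\ell-1} N_i^{-1}.$$
For $\ell = 0$, note that the sets in $\dyad E 0$ are disjoint and each contains generation-$n$ descendants. Every generation-$n$ constituent has a unique generation-$0$ ancestor. If the filtration has a single root, then $\mu$ of that root is $1$, and the base case holds.

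For the inductive step, take $P \in \dyad E \ell$ with children $S_1, \dots, S_{N_\ell}$. By the filtration property these children partition $P \cap E$, so $\sum_i \mu(S_i) = \mu(P)$. The sibling lemma gives $\mu(S_i) = \mu(S_j)$ for all $i, j$. Hence $\mu(S_i) = \mu(P)/N_\ell$, and this value does not depend on $P$. At $\ell = n$ we get $\mu(A) = \mu(B) = \prod_{i<n} N_i^{-1}$.

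An alternative, mirroring the text, is to take the last common ancestor $\widetilde A^{(j)} = \widetilde B^{(j)}$ and descend from generation $j+1$, dividing by $N_i$ at each level. The sibling lemma applied at generation $j$ starts this descent.

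The main obstacle is the base of the recursion. If $\dyad E 0$ contains several sets, then $A$ and $B$ may have no common ancestor, and the sibling lemma says nothing about generation-$0$ sets. I would handle this by adjoining a virtual root generation containing all of $E$, with repulsion value $r_{-1} < r_0$. Pairs lying in distinct generation-$0$ sets then get $r(Q,R) = r_{-1}$ instead of being undefined. With that convention, the mass-exchange computation goes through verbatim with $k = -1$, as long as the socialist property also holds at the root. Otherwise I would note that the intended setting, such as $\mathcal K_n$, has $|\dyad E 0| = 1$.

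A secondary check is that the sibling lemma's proof is valid at all intermediate levels. It uses equidistribution at finer levels to evaluate $\sum_{P \subseteq A} \mu(P)^2 = \mu(A)^2 / M_j$ for both $A$ and $B$ with the same $M_j$. That is precisely why the induction must run from fine to coarse before the coarse-to-fine mass computation above.
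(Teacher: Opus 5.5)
Your proposal is correct and is essentially the paper's argument: the paper also deduces the theorem from the sibling lemma by taking a common ancestor $\widetilde A^{(j)} = \widetilde B^{(j)}$ and descending, dividing by $N_i$ at each generation, and your top-down induction computing $\mu(P) = \prod_{i<\ell} N_i^{-1}$ organizes the same descent more cleanly. Your remaining points make explicit what the paper leaves tacit: the non-degeneracy lemma supplies the positivity that mass exchange needs, strict monotonicity of the $r_\ell$ is required, and a single generation-$0$ root (or a virtual root with $r_{-1} < r_0$) is implicitly assumed.
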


\section{Applications to Riesz 1-energy}

So far, the results of this section involve repulsions induced by any increasing sequence $\{r_{\ell} : 0 \le \ell \le n\}$. When the repulsions $r_{\ell}$ are determined in a way that reflects the geometry of the how a set $E$ is distributed geometrically, it is possible to directly connect $\mathcal{Q}(\mu)$ to the Riesz 1-energy $I_1(\mu)$.  The core idea is that we have a decomposition
$$I_1(\mu) = \iint \frac{d\mu(x) \, d\mu(y)}{|x - y|} = \dyadsum Q E n \dyadsum R E n \int_Q \int_R \frac{d\mu(x) \, d\mu(y)}{|x - y|}.$$
If $Q$ and $R$ are constituents which are separated from each other (to be precise, the distance between $Q$ and $R$ should be at least a fraction of the diameter of $Q$ and $R$), the integral is comparable to $\mu(Q) \mu(R) \operatorname{dist}(Q, R)$. This suggests two useful properties: 
\begin{itemize}
\item At each generation $k$, every constituent in $\dyad E k$ has roughly the same diameter.
\item For each constituent $A \in \dyad E k$, its children in $\dyad E {k + 1}$ should be separated on a scale comparable to the diameter of $A$.
\end{itemize}
In this case, we choose the sequence of repulsions so that $r_k \sim \operatorname{diam}(A)^{-1}$, where $A \in \dyad E k$. 

\begin{defn}
Fix a natural number $n$ and a set $E$ with the filtration property through generation $n$. Suppose that there are positive constants $C$ and $\epsilon$ and an increasing sequence $\{r_k : 0 \le k \le n\}$ such that for each $k$ we have:
\begin{itemize}
\item for each $A \in \dyad E k$, 
$$\frac 1 C r_k \le \operatorname{diam}(A)^{-1} \le C r_k,$$
\item and if $A, B \in \dyad E k$ with $\widetilde{A} = \widetilde{B}$ but $A \ne B$, 
$$\operatorname{dist}(A, B) \ge \epsilon r_k^{-1}.$$
\end{itemize}
We then say that $E$ is \emph{evenly distributed} with respect to the sequence $\{r_k\}$.
\end{defn}
 
\begin{theorem}\label{thm:energy-bounded-repulsion}
Fix a natural number $n$ and a set $E$ with the socialist filtration property through generation $n$. Suppose that $E$ is evenly distributed with respect to the sequence $\vec r = \{r_k\}$ and that $\mathcal{Q}_{\vec r}$ is the corresponding repulsion. Then
$$I_1(\mu) \gtrsim \mathcal{Q}_{\vec r}(\mu).$$
\end{theorem}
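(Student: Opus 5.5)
The plan is to compare the kernel $|x-y|^{-1}$ pointwise with the discrete kernel $r(Q,R)$ on the off-diagonal pairs, and to handle the diagonal pairs $Q=R$ by a separate estimate.

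Write
$$I_1(\mu) = \dyadsum Q E n \dyadsum R E n \int_Q\int_R \frac{d\mu(x)\,d\mu(y)}{|x-y|}.$$
Fix $Q \ne R$ in $\dyad E n$, and let $\ell < n$ be the largest generation at which they share an ancestor $P = \widetilde Q^{(\ell)} = \widetilde R^{(\ell)}$. Then $r(Q,R) = r_\ell$. For $x \in Q$ and $y \in R$, both points lie in $P$, so the first bullet of the even-distribution definition gives
$$|x-y| \le \operatorname{diam}(P) \le C r_\ell^{-1}.$$
Hence
$$\int_Q\int_R |x-y|^{-1}\,d\mu\,d\mu \ge C^{-1} r_\ell\, \mu(Q)\mu(R) = C^{-1} r(Q,R)\,\mu(Q)\mu(R).$$
Only this upper bound on the distance is needed, so the separation condition plays no role here; the key point is that the integrand is bounded below by $|x-y|^{-1}$ over the diameter of the last common ancestor. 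Summing over $Q \ne R$ controls the off-diagonal part of $\mathcal{Q}_{\vec r}(\mu)$ by $C\, I_1(\mu)$.

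For the diagonal terms $Q = R$, the contribution to $\mathcal{Q}$ is $r_n \sum_Q \mu(Q)^2$. The same argument gives
$$\int_Q\int_Q |x-y|^{-1}\,d\mu\,d\mu \ge \operatorname{diam}(Q)^{-1}\mu(Q)^2 \ge C^{-1} r_n\, \mu(Q)^2,$$
because every pair $x,y \in Q$ satisfies $|x-y| \le \operatorname{diam}(Q) \le C r_n^{-1}$. So the diagonal terms are also dominated.

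Adding the two estimates yields $I_1(\mu) \ge C^{-1}\mathcal{Q}_{\vec r}(\mu)$, with an implicit constant depending only on $C$. The main step to verify is that this lower bound really is uniform. A lower bound on an integral of a positive kernel needs only an upper bound on distances, and the partition of $E$ into $\dyad E n$, together with the uniqueness of ancestors from the filtration property, makes the last common ancestor well defined for every pair. Neither the socialist hypothesis nor the separation constant $\epsilon$ is needed for this direction. Those would matter only for the reverse inequality $I_1 \lesssim \mathcal{Q}$, where the diagonal self-energy inside each $Q$ cannot be bounded without more information about $\mu$ on $Q$.
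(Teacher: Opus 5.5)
Your proof is correct and follows essentially the same route as the paper's: decompose $I_1(\mu)$ over pairs $Q,R\in\dyad E n$, and bound $|x-y|$ from above by the diameter of the last common ancestor (or of $Q$ itself on the diagonal), which is at most $C r_\ell^{-1}$. Your one-sided formulation is cleaner than the paper's ``comparable'' phrasing, and you correctly observe that neither the separation constant $\epsilon$ nor the socialist hypothesis is needed for this direction.
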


\begin{proof}
As indicated above, we write
$$I_1(\mu) = \dyadsum Q E n \dyadsum R E n \int_Q \int_R \frac{d\mu(x) \, d\mu(y)}{|x - y|}.$$
If $Q \ne R$, the integrand is comparable to the distance between $Q$ and $R$, which is in turn comparable to the diameter of their last common ancestor. If $Q = R$, the integrand is at least $\operatorname{diam}(Q)^{-1}$, which is comparable to $r_n$ by assumption. The result follows.
\end{proof}

The prototypical examples which have the desired geometric properties are approximations to fractal $1$-sets, such as the $n$-th generation $\mathcal{K}_n$ of the four-corner Cantor set. We have the following corollary:

\begin{corollary}  \label{cor:CantorEx}
Fix a natural number $n$ and consider the repulsion $\mathcal{Q}$ generated by the sequence $\{4^{\ell} : 0 \le \ell \le n\}.$ If $\mu$ is a positive probability measure supported on $\mathcal{K}_n$,
$$I_1(\mu) \gtrsim n.$$
\end{corollary}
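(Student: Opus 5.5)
The plan is to chain three facts: Theorem~\ref{thm:energy-bounded-repulsion} gives $I_1(\mu) \gtrsim \mathcal{Q}_{\vec r}(\mu)$; $\mathcal{Q}_{\vec r}(\mu)$ is bounded below by its minimum over probability measures; and Theorem~\ref{thm:socialist} identifies that minimum explicitly. Only the minimizer's cell masses enter $\mathcal{Q}$, and the computation produces a quantity comparable to $n$.

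\textbf{Step 1: the hypotheses for $\mathcal{K}_n$.} Take $\dyad{\mathcal{K}_n}{\ell}$ to be the $4^\ell$ squares of side $4^{-\ell}$ from the construction, for $0 \le \ell \le n$. These form a filtration through generation $n$. It is socialist with $N_\ell = 4$, because each square has exactly four corner children. It is evenly distributed with respect to $r_\ell = 4^\ell$: each $A \in \dyad{\mathcal{K}_n}{\ell}$ has $\operatorname{diam}(A) = \sqrt2\, 4^{-\ell}$, and two distinct children of a generation-$\ell$ square are separated by at least $2\cdot 4^{-\ell-1}$. One point of bookkeeping matters here. The definition compares siblings in $\dyad E k$ with $\epsilon r_k^{-1}$, so the indices must be aligned. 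The separation of siblings in $\dyad E {k}$ is $2\cdot 4^{-k}$, which is at least $\epsilon 4^{-k}$, and this is also comparable to $r_{k-1}^{-1}$ up to the constant $4$. Either convention therefore works, with constants absorbed into $\epsilon$ and $C$. Theorem~\ref{thm:energy-bounded-repulsion} then gives $I_1(\mu) \gtrsim \mathcal{Q}(\mu)$ with an implied constant independent of $n$.

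\textbf{Step 2: the minimizer exists and is uniform.} $\mathcal{Q}(\mu)$ depends only on the vector $(\mu(Q))_{Q \in \dyad{\mathcal{K}_n}{n}}$, which ranges over a compact simplex, and it is continuous there. A minimizer therefore exists, realized for instance by normalized Lebesgue measure on each square. By Theorem~\ref{thm:socialist}, every minimizer assigns mass $4^{-n}$ to each square. So for any $\mu$,
\[
\mathcal{Q}(\mu) \ge \mathcal{Q}(\mu_{\mathrm{eq}}) = 4^{-2n}\sum_{Q}\sum_{R} r(Q,R).
\]

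\textbf{Step 3: evaluating the sum.} Fix $Q$. The number of $R$ whose last common ancestor with $Q$ is at generation $\ell < n$ equals $3\cdot 4^{n-\ell-1}$, and $R = Q$ contributes $r_n = 4^n$. Hence
\[
\mathcal{Q}(\mu_{\mathrm{eq}}) = 4^{-n}\Big(4^n + \sum_{\ell=0}^{n-1} 3\cdot 4^{n-\ell-1}\,4^{\ell}\Big) = 1 + \tfrac34 n \gtrsim n.
\]
Combining this with Steps 1 and 2 gives $I_1(\mu) \gtrsim n$.

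The main obstacle is Step 1. One must check carefully that the constants in Theorem~\ref{thm:energy-bounded-repulsion} are uniform in $n$; the diagonal terms are the delicate part, but there $\int_Q\int_Q |x-y|^{-1} \ge \mu(Q)^2/\operatorname{diam}(Q)$ suffices. Steps 2 and 3 are routine once Theorem~\ref{thm:socialist} is available.
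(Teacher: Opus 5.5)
Your proposal is correct and follows the same route as the paper. You verify the socialist and evenly-distributed hypotheses for $\mathcal{K}_n$, use Theorem~\ref{thm:socialist} to get masses $4^{-n}$ for the minimizer, count the $3\cdot 4^{n-\ell-1}$ cubes at each level, and conclude via Theorem~\ref{thm:energy-bounded-repulsion}. Your version is in fact slightly tighter: you justify that a minimizer exists and that $\mathcal{Q}(\mu)\ge\mathcal{Q}(\mu_{\mathrm{eq}})$ for an arbitrary $\mu$, which the paper leaves implicit when it substitutes $\mu(\widetilde Q^{(\ell)})=4^{-\ell}$.
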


\begin{proof}
It is immediate that $E:= \mathcal{K}_n$ has the socialist filtration property through generation $n$, choosing $N_{\ell} = 4$ for $\ell < n$ and $N_n = 0$. Furthermore, $E$ is evenly distributed with respect to the sequence $\{4^{\ell} : 0 \le \ell \le n\}$. Applying Theorem \ref{thm:socialist}, the repulsion-minimizing measure on $\mathcal{K}_n$ has the equidistribution property: if $Q$ is a constituent cube at generation $n$, $\mu(Q) = 4^{-n}$. 

We are also able to directly count the number of cubes corresponding to each potential value of $r(Q, R)$: given $Q \in \dyad E n$ and a generation $\ell \le n$, there are exactly $3 \cdot 4^{n - \ell - 1}$ cubes $R$ for which
$$\widetilde Q^{(\ell)} = \widetilde R^{(\ell)} \quad \text{ but } \quad \widetilde Q^{(\ell + 1)} \ne \widetilde R^{(\ell + 1)}.$$
Specifically, these are the generation $n$ descendants of three siblings of $\widetilde{Q}^{(\ell + 1)}$ contained in $\widetilde Q^{(\ell)}$. These cubes are also characterized as the cubes for which
$$r(Q, R) = 4^{\ell}.$$

Neglecting the diagonal contribution, we can then calculate
\[
\mathcal{Q}(\mu) = \dyadsum Q E n \dyadsum R E n \mu(Q) \mu(R) r(Q, R) 
\ge \dyadsum Q E n \sum_{\ell = 0}^{n - 1} \sum_{\substack{R \in \dyad E n \\ \widetilde{Q}^{(\ell)} = \widetilde{R}^{(\ell)} \\ \widetilde{Q}^{(\ell + 1)} \ne \widetilde{R}^{(\ell + 1)}}} \mu(Q) \mu(R) r(Q, R).
\]
For the innermost sum, note that $r(Q, R) = 4^{\ell}$ is constant in this configuration. Moreover, the sum of measures $\mu(R)$ is equal to
$$\sum_{\substack{R \in \dyad E n \\ \widetilde{Q}^{(\ell)} = \widetilde{R}^{(\ell)} \\ \widetilde{Q}^{(\ell + 1)} \ne \widetilde{R}^{(\ell + 1)}}} \mu(R) = \mu(\widetilde{Q}^{(\ell)} \setminus \widetilde{Q}^{(\ell + 1)}) = \frac 3 4 \mu(\widetilde{Q}^{(\ell)}).$$
Combining these results,
$$\mathcal{Q}(\mu) \ge \sum_{\ell = 0}^{n - 1} \dyadsum Q E n \mu(Q) \cdot \frac 3 4 \mu(\widetilde{Q}^{(\ell)}) \cdot 4^{\ell}.$$
Finally, using that $\mu(\widetilde Q^{(\ell)}) = 4^{-\ell}$ and $\mu$ is a probability measure, by Theorem \ref{thm:energy-bounded-repulsion},
\[I_1(\mu) \gtrsim \mathcal{Q}(\mu) \gtrsim \sum_{\ell = 0}^{n - 1} 1 = n. \qedhere\]
\end{proof}

\begin{corollary}
For all natural numbers $n$, the Riesz 1-capacity of $\mathcal{K}_n$ is comparable to $1/n$.
\end{corollary}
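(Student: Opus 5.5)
We need to show $\operatorname{Cap}_1(\mathcal{K}_n) \sim 1/n$, i.e. that $\inf_\mu I_1(\mu) \sim n$ over probability measures supported on $\mathcal{K}_n$. Since $\operatorname{Cap}_1(\mathcal{K}_n) = \sup_\mu I_1(\mu)^{-1}$, this splits into two one-sided estimates.

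\emph{Upper bound on capacity.} Corollary~\ref{cor:CantorEx} gives $I_1(\mu) \gtrsim n$ for every positive probability measure $\mu$ supported on $\mathcal{K}_n$. Taking the supremum of $I_1(\mu)^{-1}$ yields $\operatorname{Cap}_1(\mathcal{K}_n) \lesssim 1/n$ at once.

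\emph{Lower bound on capacity.} It suffices to exhibit one measure with $I_1(\mu) \lesssim n$. Take $\mu$ to be normalized Lebesgue measure on $\mathcal{K}_n$, which gives each of the $4^n$ squares mass $4^{-n}$ with density $4^{n}$ (each square has area $4^{-2n}$). Decompose
\[
I_1(\mu) = \sum_{Q,R} \int_Q\int_R \frac{d\mu(x)\,d\mu(y)}{|x-y|}.
\]
The off-diagonal and diagonal pairs are handled separately.

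\emph{Off-diagonal pairs.} Suppose $Q \ne R$ have last common ancestor at generation $\ell$. Since $\mathcal{K}_n$ is evenly distributed with respect to $\{4^\ell\}$, the children of a generation-$\ell$ square are separated by at least $\epsilon 4^{-\ell}$. Hence $|x-y| \gtrsim 4^{-\ell}$ for $x \in Q$, $y \in R$, and the integrand is $\lesssim 4^{\ell}$. So this part is bounded above by $C\mathcal{Q}(\mu)$ restricted to off-diagonal pairs. Now rerun the counting from Corollary~\ref{cor:CantorEx} as an upper bound: for fixed $Q$ the pairs at level $\ell$ contribute $\mu(Q)\cdot\tfrac34 4^{-\ell}\cdot 4^\ell$. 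Summing over $\ell$ and $Q$ gives at most $\tfrac34 n$.

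\emph{Diagonal pairs.} For each square $Q$ of side $4^{-n}$ with uniform density, scaling gives
\[
\iint_{Q\times Q}\frac{d\mu\,d\mu}{|x-y|} = \mu(Q)^2\, 4^{n}\, c_0,
\]
where $c_0 = \iint_{[0,1]^2\times[0,1]^2}|x-y|^{-1}\,dx\,dy < \infty$. This is finite because $|x|^{-1}$ is locally integrable in the plane. Summing over the $4^n$ squares gives $c_0\, 4^n \cdot 4^n \cdot 4^{-2n} = c_0$.

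Combining the two parts, $I_1(\mu) \lesssim n + 1 \lesssim n$, hence $\operatorname{Cap}_1(\mathcal{K}_n) \gtrsim 1/n$.

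The only mildly delicate point is the diagonal term. There the repulsion only records $r_n = 4^n$, and one has to check that the actual self-energy of a small square is also of order $\mu(Q)^2 4^n$; the scaling computation above does exactly that. The upper-bound direction uses the even-distribution separation, not just the diameter comparison.
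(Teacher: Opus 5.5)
Your proof is correct and follows the paper's argument: Corollary~\ref{cor:CantorEx} gives $I_1(\mu)\gtrsim n$ for every admissible $\mu$, hence $\operatorname{Cap}_1(\mathcal{K}_n)\lesssim 1/n$, and normalized Lebesgue measure on $\mathcal{K}_n$ gives the reverse bound. You supply more than the paper does: it only asserts that the equidistributed measure is sharp, and it writes the first inequality with the direction reversed, whereas your off-diagonal counting and your scaling computation of the diagonal self-energy are both sound. The only quibble is that the paper's separation condition gives children of a generation-$\ell$ square distance at least $\epsilon 4^{-(\ell+1)}$ rather than $\epsilon 4^{-\ell}$, which changes nothing.
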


\begin{proof}
By the previous lemma, $\operatorname{Cap}_1(\mathcal{K}_n) \gtrsim 1/n$ since any positive Borel probability measure supported on $\mathcal{K}_n$ has $1$-energy $\gtrsim n$. This result is sharp by considering the equidistributed measure 
\[
\mu = \frac{\left.\mathcal{L}^2\right|_{\mathcal{K}_n}}{\mathcal{L}^2(\mathcal{K}_n)}. \qedhere
\]
\end{proof}

\section{The general setting: connections with quadratic forms}

Consider the general setting where a set $E$ is given as a disjoint union
$$E = \bigcup_{i = 1}^n B(z_i, r)$$
for a fixed scale $r > 0$ and a set of points $\{z_i : 1 \le i \le n\}$ which are separated on scale $r$:
$$|z_i - z_j| \ge 2r \quad \text{ for } i \ne j.$$
Such a set does not necessarily have any filtration property available, and the minimum $1$-energy of a positive probability measure supported on $E$ depends on the precise geometric distribution of points. This distribution can be encoded by the following matrix.

\begin{defn}
Given a scale $r > 0$ and a collection of points $\{z_i : 1 \le i \le n\}$ with $|z_i - z_j| \ge 2r$ for all $i \ne j$, the corresponding \emph{repulsion matrix} is defined by
$$\mathbb{A}_{i, j} = (r + |z_i - z_j|)^{-1}.$$
\end{defn}
Note that, by the separation condition, we have that
$$\mathbb{A}_{i, j} \sim \left\{\begin{array}{lr} r^{-1} & i = j \\
|z_i - z_j|^{-1} & i \ne j\end{array}.\right.$$
The repulsion matrix has a number of desirable properties -- namely, it is symmetric and positive definite. The first property is immediate, while the second follows from Schoenberg's interpolation theorem; see, e.g. \cite[Chapters 14 and 15]{LigWar09}. In particular, the matrix is invertible.

We have the following comparison between the $1$-energy of a measure and a property of the repulsion matrix; this is strongly related to \cite{RajRanRos10}. For notation, we let $\vec 1$ be the (column) vector of all ones in $\mathbb{R}^n$.

\begin{theorem}
Fix a scale $r > 0$ and a collection of points $\{z_i : 1 \le i \le n\}$ with $|z_i - z_j| \ge 2r$ for all $i \ne j$. If $\mu$ is a positive Borel probability measure supported on $E$ and $A$ is the corresponding repulsion matrix,
$$I_1(\mu) \gtrsim (\vec 1\hspace{.05cm}{}^T  \mathbb{A}^{-1} \vec 1)^{-1}.$$
As a consequence, the Riesz 1-capacity of $E$ has the estimate
$$\operatorname{Cap}_1(E) \gtrsim \vec 1\hspace{.05cm}{}^T \mathbb{A}^{-1} \vec 1.$$
\end{theorem}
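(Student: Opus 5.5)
The plan has two steps. First, reduce $I_1(\mu)$ to the quadratic form of $\mathbb{A}$ evaluated at the mass vector. Second, minimize that quadratic form over the probability simplex by Lagrange multipliers and Cauchy--Schwarz.

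\textbf{Step 1: discretization.} Set $m_i = \mu(B(z_i,r))$, so $m = (m_i)$ satisfies $m_i \ge 0$ and $\vec 1^T m = 1$. Decompose
\[
I_1(\mu) = \sum_{i,j} \int_{B(z_i,r)}\int_{B(z_j,r)} \frac{d\mu(x)\,d\mu(y)}{|x-y|}.
\]
For $x \in B(z_i,r)$ and $y \in B(z_j,r)$ we have
\[
|x-y| \le |z_i - z_j| + 2r \le 2(r + |z_i-z_j|),
\]
so the integrand is at least $\tfrac12 \mathbb{A}_{i,j}$. This holds uniformly, including the diagonal terms $i=j$, where $|x-y|\le 2r$. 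It is the same mechanism as in Theorem \ref{thm:energy-bounded-repulsion}, but simpler, because only an upper bound on $|x-y|$ is needed. Hence
\[
I_1(\mu) \ge \tfrac12\, m^T \mathbb{A}\, m,
\]
and the separation hypothesis is not even needed for this direction.

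\textbf{Step 2: minimizing the quadratic form.} We need $m^T \mathbb{A} m \ge (\vec 1^T \mathbb{A}^{-1}\vec 1)^{-1}$ for every $m$ with $\vec 1^T m = 1$; we drop the positivity constraint, which only enlarges the feasible set. Since $\mathbb{A}$ is symmetric positive definite (by the Schoenberg fact cited above), it has a positive definite square root. Apply Cauchy--Schwarz to $\mathbb{A}^{1/2}m$ and $\mathbb{A}^{-1/2}\vec 1$:
\[
1 = (\vec 1^T m)^2 = \big\langle \mathbb{A}^{-1/2}\vec 1,\ \mathbb{A}^{1/2} m\big\rangle^2 \le (\vec 1^T \mathbb{A}^{-1}\vec 1)(m^T \mathbb{A} m).
\]
This gives the claim; equality occurs at $m \propto \mathbb{A}^{-1}\vec 1$, matching the Lagrange multiplier computation. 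Combining with Step 1 yields $I_1(\mu) \ge \tfrac12(\vec 1^T\mathbb{A}^{-1}\vec 1)^{-1}$. Taking the supremum of $I_1(\mu)^{-1}$ over $\mu$, using the definition of $\operatorname{Cap}_1$, gives the capacity estimate. That estimate is logically an upper bound on $\operatorname{Cap}_1(E)$, so the stated ``$\gtrsim$'' should presumably read $\lesssim$; I will note this.

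\textbf{Main obstacle.} The only delicate point is the positive definiteness of $\mathbb{A}$, which I take from the cited Schoenberg result, since $t\mapsto (r+t)^{-1}$ is completely monotone in $t$ and hence conditionally gives a positive definite radial kernel. Without it, $\mathbb{A}^{\pm 1/2}$ and the Cauchy--Schwarz step fail. Everything else is routine.
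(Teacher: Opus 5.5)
Your proof is correct and has the same skeleton as the paper's: bound $I_1(\mu)$ below by $m^{T}\mathbb{A}m$ for the mass vector $m$, then minimize this form over the hyperplane $\vec 1^{T}m=1$ with positivity dropped. The real difference is in the minimization step.

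The paper argues variationally. It asserts that a unique minimizer $x^\ast$ exists, derives $\mathbb{A}x^\ast=\lambda\vec 1$ from the first variation, and normalizes to get $\lambda=(\vec 1^{T}\mathbb{A}^{-1}\vec 1)^{-1}$. Your Cauchy--Schwarz step in the $\mathbb{A}$-inner product instead proves the inequality for every admissible $m$ directly. It needs no existence argument and gives the equality case $m\propto\mathbb{A}^{-1}\vec 1$ for free. What the paper's route buys is the Euler--Lagrange relation $\mathbb{A}x^\ast=\lambda\vec 1$ itself, which the following row-sum theorem is built on. Your single estimate $|x-y|\le 2(r+|z_i-z_j|)$ is also a tidier discretization than the paper's separate diagonal and off-diagonal bounds.

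You are right about the capacity statement. A uniform lower bound on $I_1(\mu)$ gives $\operatorname{Cap}_1(E)\lesssim\vec 1^{T}\mathbb{A}^{-1}\vec 1$, so the stated $\gtrsim$ is reversed. The same slip occurs in the first sentence of the proof of the capacity corollary for $\mathcal{K}_n$. The opposite bound would need an actual measure, which is available for instance when $x^\ast\ge 0$.

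One minor correction to your aside. Separation is indeed not needed for the pointwise kernel bound, but it is still used in Step 1. It makes the balls essentially disjoint, so that $I_1(\mu)$ splits as your double sum with $\vec 1^{T}m=1$. It also makes the $z_i$ distinct, without which $\mathbb{A}$ is singular.
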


\begin{proof}
If $\mu$ is a positive Borel probability measure supported on $E$, form a vector $\vec \mu$ whose $i$-th entry is the $\mu$-measure of $B(z_i, r)$; this is a vector whose entries are nonnegative and sum to one. As in the case where the filtration property is available, we can write the $1$-energy in terms of a quadratic form:
$$I_1(\mu) = \iint  \frac{d\mu(x) \, d\mu(y)}{|x - y|} = \sum_{i = 1}^n \sum_{j = 1}^n \int_{B(z_i, r)} \int_{B(z_j, r)} \frac{d\mu(x) \, d\mu(y)}{|x - y|}.$$
In the diagonal case, $|x - y|^{-1} \ge r^{-1}$; in the off-diagonal case, $|x - y|^{-1} \gtrsim |z_i - z_j|$. Therefore,
$$I_1(\mu) \gtrsim \sum_{i = 1}^n \sum_{j = 1}^n \mu(B(z_i, r)) \mu(B(z_j, r)) \mathbb{A}_{i, j} = \vec \mu^{T} \mathbb{A} \vec \mu.$$
We now wish to minimize this over measures $\mu$, in order to get a uniform lower bound for the $1$-energies. 
Now consider the optimization problem
$$(\ast) \quad \quad \operatorname{min} \left\{\vec x^T \mathbb{A} \vec x : \vec x^T \vec 1 = 1\right\}.$$
This can be solved using a variational method (or by Lagrange multipliers). Since $\mathbb{A}$ is positive definite, a unique minimizer (subject to the affine constraint $\vec x^T \vec 1 = 1$) must exist. Denote it as $x^{\ast}$; perturb it via a vector $\vec \eta$ with $\vec \eta^T \vec 1 = 0$. Since $x^{\ast}$ is the unique minimizer, 
\begin{align*}
0 &= \left.\frac{d}{dt}\right|_{t = 0} (x^{\ast} + t \vec \eta)^T \mathbb{A} (x^{\ast} + t \vec \eta) \\
&= \left.\frac{d}{dt}\right|_{t = 0} \left({x^{\ast}}^T \mathbb{A} x^{\ast} + 2t \vec \eta \mathbb{A} x^{\ast} + t^2 \vec \eta^T \mathbb{A} \vec \eta\right) \\
&= 2 \vec \eta \mathbb{A} x^{\ast}.
\end{align*}
Since this holds for all $\vec \eta$ which are orthogonal to $\vec 1$, it follows that $\mathbb{A} x^{\ast} = \lambda \vec 1$ for a scalar $\lambda$; the minimum value of the quadratic form is then
$$x^{\ast} \mathbb{A} x^{\ast} = \lambda x^{\ast} \vec 1 = \lambda.$$
The scalar may be found through the normalization condition for $x^{\ast}$: since $x^{\ast} = \lambda \mathbb{A}^{-1} \vec 1$, we have
$$1 = {x^{\ast}}^T \vec 1 = (\lambda \mathbb{A}^{-1} \vec 1)^T \vec 1 = \lambda (\vec 1\hspace{.05cm}{}^T  \mathbb{A}^{-1} \vec 1).$$
This can be rearranged to yield the desired result.
\end{proof}

The quantity $\vec 1\hspace{.05cm}{}^T  \mathbb{A}^{-1} \vec 1$ is the sum of the entries of the inverse matrix $\mathbb{A}^{-1}$, although this does not have an immediately apparent geometric description. Under two additional hypotheses, however, there is a far simpler description. We will assume that each column of the matrix $\mathbb{A}$ has a (roughly) common sum and that the minimizer $x^{\ast}$ is nonnegative.

\begin{theorem}
Fix a scale $r > 0$ and a collection of points $\{z_i : 1 \le i \le n\}$ with $|z_i - z_j| \ge 2r$ for all $i \ne j$. Let $\mathbb{A}$ be the corresponding repulsion matrix and $x^{\ast}$ the unique minimizer for $(\ast)$. Assume further that each row-sum $\sum_j \mathbb{A}_{i, j}$ is comparable to a quantity $S$ and that each entry of $x^{\ast}$ is nonnegative. If $\mu$ is a positive Borel probability measure supported on $E$,
$$I_1(\mu) \gtrsim S^{-1}.$$
\end{theorem}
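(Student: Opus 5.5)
The plan is to start from the previous theorem, which gives $I_1(\mu) \gtrsim (\vec 1\hspace{.05cm}{}^T \mathbb{A}^{-1} \vec 1)^{-1}$. From its proof we also have $(\vec 1\hspace{.05cm}{}^T \mathbb{A}^{-1} \vec 1)^{-1} = \lambda$, where $\mathbb{A} x^{\ast} = \lambda \vec 1$ and $x^{\ast}$ is the minimizer of $(\ast)$, normalized by ${x^{\ast}}^T \vec 1 = 1$. So it suffices to show $\lambda \gtrsim S^{-1}$. The two extra hypotheses, comparable row sums and $x^{\ast} \ge 0$, enter when $\lambda$ is evaluated by pairing the identity $\mathbb{A} x^{\ast} = \lambda \vec 1$ against $\vec 1$.

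\textbf{Step 1.} Multiply $\mathbb{A} x^{\ast} = \lambda \vec 1$ on the left by $\vec 1\hspace{.05cm}{}^T$. The right side gives $n\lambda$. Since $\mathbb{A}$ is symmetric, column sums equal row sums, so the left side is $\sum_j x^{\ast}_j \sum_i \mathbb{A}_{i,j}$. Because $x^{\ast}_j \ge 0$ and $\sum_j x^{\ast}_j = 1$, this is a convex combination of numbers comparable to $S$, hence comparable to $S$. This is exactly where nonnegativity is used: without it, cancellation could break the comparison. The conclusion is $\lambda \sim S/n$.

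\textbf{Step 2.} It remains to compare $S/n$ with $S^{-1}$, that is, to show $S^2 \gtrsim n$. This is the step I expect to be the main obstacle. The inequality is not scale-invariant, so it needs a normalization of the configuration. In the intended setting the balls $B(z_i,r)$ lie in a set of bounded diameter, e.g. $E \subseteq [0,1]^2$ as for $\mathcal{K}_n$, and I would make $\operatorname{diam}(E) \lesssim 1$ explicit. Then every entry satisfies $\mathbb{A}_{i,j} = (r + |z_i - z_j|)^{-1} \gtrsim (r + \operatorname{diam} E)^{-1} \gtrsim 1$, using $r \lesssim \operatorname{diam} E$ (each ball is contained in $E$). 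Summing over $j$, each row sum is $\gtrsim n$, so $S \gtrsim n \ge 1$. Hence $S^2 \gtrsim nS \gtrsim n$.

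\textbf{Step 3.} Combining the steps gives $I_1(\mu) \gtrsim \lambda \sim S/n \gtrsim S^{-1}$, which is the claim. I would close by remarking that Step 1 in fact yields the sharper statement $I_1(\mu) \gtrsim S/n$. The weaker form $S^{-1}$ stated in the theorem follows once the bounded-diameter normalization from Step 2 is in force, so I would state that normalization explicitly as part of the setting.
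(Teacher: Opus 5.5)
Your Step 1 is the paper's proof. You sum $\mathbb{A}x^{\ast}=\lambda\vec 1$ over $i$, interchange the sums, and use symmetry, $x^{\ast}\ge 0$ and $\sum_j x^{\ast}_j=1$ to get $\lambda\sim S/n$. The paper stops at that point. What its argument actually delivers, via the previous theorem, is $I_1(\mu)\gtrsim\lambda\sim S/n$, which is the sharper form you record at the end.

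Your concern in Step 2 is justified: the bound $I_1(\mu)\gtrsim S^{-1}$ is false as stated.
\begin{itemize}
\item Dilate the configuration by $t$ ($z_i\mapsto tz_i$, $r\mapsto tr$). This gives $\mathbb{A}\mapsto t^{-1}\mathbb{A}$ and $S\mapsto t^{-1}S$, and leaves $x^{\ast}$ and the comparability constants unchanged. Meanwhile $I_1$ of the pushed-forward measure scales by $t^{-1}$ and $S^{-1}$ scales by $t$, so no absolute constant can work.
\item Concretely, take $n=1$. Then $S=r^{-1}$ and $x^{\ast}=1$, and normalized area measure on $B(z_1,r)$ has $I_1(\mu)=c/r$, which is not $\gtrsim r$ for large $r$.
\end{itemize}
So imposing a normalization repairs the statement; it is not a gap in your argument.

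Be aware, though, that once $\operatorname{diam}(E)\lesssim 1$ the stated bound is trivial. It uses only $S\gtrsim\mathbb{A}_{ii}$, not $x^{\ast}\ge 0$. Indeed $I_1(\mu)\ge\operatorname{diam}(E)^{-1}\gtrsim 1$, while $S\gtrsim\mathbb{A}_{ii}=r^{-1}\ge 2\operatorname{diam}(E)^{-1}\gtrsim 1$, so $S^{-1}\lesssim 1\lesssim I_1(\mu)$. Your Step 2 is therefore correct but discards what Step 1 gains: under your normalization, $S/n\gtrsim 1\gtrsim nS^{-1}$.

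The meaningful theorem is $I_1(\mu)\gtrsim S/n$, and this is also what the paper's subsequent application to $\mathcal{K}_n$ requires. There, with $4^n$ points and row sums $\sim n4^n$, it gives $I_1(\mu)\gtrsim n$, in agreement with Corollary~\ref{cor:CantorEx}. I would state that as the result rather than normalizing.
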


\begin{proof}
Recall that the minimizer $x^{\ast}$ satisfies the equation $A x^{\ast} = \lambda \vec 1.$ Writing out the matrix multiplication, we have
$$\sum_j \mathbb{A}_{i, j} x^{\ast}_j = \lambda$$
for each $1 \le i \le n.$ Sum this in $i$ and reverse the order:
\begin{align*}
\lambda n &= \sum_{i = 1}^n \lambda = \sum_{i = 1}^n \sum_{j = 1}^n \mathbb{A}_{i, j} x^{\ast}_j = \sum_{j = 1}^n \sum_{i = 1}^n \mathbb{A}_{i, j} x^{\ast}_j \sim \sum_{j = 1}^n S x^{\ast}_j = S.
\end{align*}
Therefore, $\lambda \sim S / n$.
\end{proof}

As an application of the previous result, consider again the $n$-th generation $\mathcal{K}_n$ of the four-corner Cantor set. Using the square-counting argument of \cite{BatVol10}, each row-sum of the corresponding repulsion matrix is comparable to $n \cdot 4^n$ and the number of constituents is $4^n$. Thus we are left with
$$I_1(\mu) \gtrsim \frac 1 n.$$
This is consistent with the results of Corollary \ref{cor:CantorEx}. Numerical results (through $\mathcal{K}_7$) indicate that the minimizer $x^{\ast}$ is always nonnegative; however, the author is aware of a proof for all generations. Numerical results also strongly suggest this for an arbitrary point configuration in the plane. As such, we state the following conjecture:
\begin{conjecture}
If $\mathbb{A}$ is the repulsion matrix of a set $E = \displaystyle \bigcup_{i = 1}^n B(z_i, r)$ with $|z_i - z_j| \ge 2r$ whenever $i \ne j$, then $\mathbb{A}^{-1} \vec 1 \ge \vec 0.$
\end{conjecture}

\section{Acknowledgments} 

Part of this work was completed at the American Institute of Mathematics during the SQuaRE ``Covering Fractals by Curves.'' The author is very grateful to her collaborators Paige Bright, Caleb Marshall, and Krystal Taylor; their conversations, suggestions, and support were critical in developing this paper. The author would also like to sincerely thank Jimmie Adriazola for suggesting a connection to the fast multipole method, without which this paper would not have been possible. The original inspiration for this project was a question from Ben Jaye regarding the usage of energy techniques for Favard length.
\bibliography{refs_energy}
\bibliographystyle{abbrv}

\end{document}